\documentclass[11pt,leqno]{amsart}
\usepackage{amssymb}
\usepackage{xypic}
\setlength{\headheight}{8pt}
\setlength{\textheight}{22.4cm}
\setlength{\textwidth}{14.5cm}
\setlength{\oddsidemargin}{.1cm}
\setlength{\evensidemargin}{.1cm}
\setlength{\topmargin}{0.2cm}
\begin{document}
\theoremstyle{plain}
\newtheorem{thm}{Theorem}[section]
\newtheorem*{thm1}{Theorem 1}
\newtheorem*{thm2}{Theorem 2}
\newtheorem{lemma}[thm]{Lemma}
\newtheorem{lem}[thm]{Lemma}
\newtheorem{cor}[thm]{Corollary}
\newtheorem{propose}[thm]{Proposition}
\newtheorem{ex}[thm]{Example}
\theoremstyle{definition}
\newtheorem{rmk}[thm]{Remark}
\newtheorem{defn}[thm]{Definition}
\newtheorem{notations}[thm]{Notations}
\newtheorem{claim}[thm]{Claim}
\newtheorem{ass}[thm]{Assumption}
\numberwithin{equation}{section}
\newcounter{elno}      
\def\points{\list
{\hss\llap{\upshape{(\roman{elno})}}}{\usecounter{elno}}}
\let\endpoints=\endlist

%
%
%
\newcommand{\mc}{\mathcal} 
\newcommand{\mb}{\mathbb} 
\newcommand{\surj}{\twoheadrightarrow} 
\newcommand{\inj}{\hookrightarrow} \newcommand{\zar}{{\rm zar}} 
\newcommand{\an}{{\rm an}} \newcommand{\red}{{\rm red}} 
\newcommand{\Rank}{{\rm rk}} \newcommand{\codim}{{\rm codim}} 
\newcommand{\rank}{{\rm rank}} \newcommand{\Ker}{{\rm Ker \ }} 
\newcommand{\Pic}{{\rm Pic}} \newcommand{\Div}{{\rm Div}} 
\newcommand{\Hom}{{\rm Hom}} \newcommand{\im}{{\rm im}} 
\newcommand{\Spec}{{\rm Spec \,}} \newcommand{\Sing}{{\rm Sing}} 
\newcommand{\sing}{{\rm sing}} \newcommand{\reg}{{\rm reg}} 
\newcommand{\Char}{{\rm char}} \newcommand{\Tr}{{\rm Tr}} 
\newcommand{\Gal}{{\rm Gal}} \newcommand{\Min}{{\rm Min \ }} 
\newcommand{\Max}{{\rm Max \ }} \newcommand{\Alb}{{\rm Alb}\,} 
\newcommand{\GL}{{\rm GL}\,} 
\newcommand{\ie}{{\it i.e.\/},\ } \newcommand{\niso}{\not\cong} 
\newcommand{\nin}{\not\in} 
\newcommand{\soplus}[1]{\stackrel{#1}{\oplus}} 
\newcommand{\by}[1]{\stackrel{#1}{\rightarrow}} 
\newcommand{\longby}[1]{\stackrel{#1}{\longrightarrow}} 
\newcommand{\vlongby}[1]{\stackrel{#1}{\mbox{\large{$\longrightarrow$}}}} 
\newcommand{\ldownarrow}{\mbox{\Large{\Large{$\downarrow$}}}} 
\newcommand{\lsearrow}{\mbox{\Large{$\searrow$}}} 
\renewcommand{\d}{\stackrel{\mbox{\scriptsize{$\bullet$}}}{}} 
\newcommand{\dlog}{{\rm dlog}\,} 
\newcommand{\longto}{\longrightarrow} 
\newcommand{\vlongto}{\mbox{{\Large{$\longto$}}}} 
\newcommand{\limdir}[1]{{\displaystyle{\mathop{\rm lim}_{\buildrel\longrightarrow\over{#1}}}}\,} 
\newcommand{\liminv}[1]{{\displaystyle{\mathop{\rm lim}_{\buildrel\longleftarrow\over{#1}}}}\,} 
\newcommand{\norm}[1]{\mbox{$\parallel{#1}\parallel$}} 
\newcommand{\boxtensor}{{\Box\kern-9.03pt\raise1.42pt\hbox{$\times$}}} 
\newcommand{\into}{\hookrightarrow} \newcommand{\image}{{\rm image}\,} 
\newcommand{\Lie}{{\rm Lie}\,} 
\newcommand{\CM}{\rm CM}
\newcommand{\sext}{\mbox{${\mathcal E}xt\,$}} 
\newcommand{\shom}{\mbox{${\mathcal H}om\,$}} 
\newcommand{\coker}{{\rm coker}\,} 
\newcommand{\sm}{{\rm sm}} 
\newcommand{\tensor}{\otimes} 
\renewcommand{\iff}{\mbox{ $\Longleftrightarrow$ }} 
\newcommand{\supp}{{\rm supp}\,} 
\newcommand{\ext}[1]{\stackrel{#1}{\wedge}} 
\newcommand{\onto}{\mbox{$\,\>>>\hspace{-.5cm}\to\hspace{.15cm}$}} 
\newcommand{\propsubset} {\mbox{$\textstyle{ 
\subseteq_{\kern-5pt\raise-1pt\hbox{\mbox{\tiny{$/$}}}}}$}} 
\newcommand{\sA}{{\mathcal A}}
\newcommand{\sa}{{\mathcal a}} 
\newcommand{\sB}{{\mathcal B}} \newcommand{\sC}{{\mathcal C}} 
\newcommand{\sD}{{\mathcal D}} \newcommand{\sE}{{\mathcal E}} 
\newcommand{\sF}{{\mathcal F}} \newcommand{\sG}{{\mathcal G}} 
\newcommand{\sH}{{\mathcal H}} \newcommand{\sI}{{\mathcal I}} 
\newcommand{\sJ}{{\mathcal J}} \newcommand{\sK}{{\mathcal K}} 
\newcommand{\sL}{{\mathcal L}} \newcommand{\sM}{{\mathcal M}} 
\newcommand{\sN}{{\mathcal N}} \newcommand{\sO}{{\mathcal O}} 
\newcommand{\sP}{{\mathcal P}} \newcommand{\sQ}{{\mathcal Q}} 
\newcommand{\sR}{{\mathcal R}} \newcommand{\sS}{{\mathcal S}} 
\newcommand{\sT}{{\mathcal T}} \newcommand{\sU}{{\mathcal U}} 
\newcommand{\sV}{{\mathcal V}} \newcommand{\sW}{{\mathcal W}} 
\newcommand{\sX}{{\mathcal X}} \newcommand{\sY}{{\mathcal Y}} 
\newcommand{\sZ}{{\mathcal Z}} \newcommand{\ccL}{\sL} 
\newcommand{\A}{{\mathbb A}} \newcommand{\B}{{\mathbb 
B}} \newcommand{\C}{{\mathbb C}} \newcommand{\D}{{\mathbb D}} 
\newcommand{\E}{{\mathbb E}} \newcommand{\F}{{\mathbb F}} 
\newcommand{\G}{{\mathbb G}} \newcommand{\HH}{{\mathbb H}} 
\newcommand{\I}{{\mathbb I}} \newcommand{\J}{{\mathbb J}} 
\newcommand{\M}{{\mathbb M}} \newcommand{\N}{{\mathbb N}} 
\renewcommand{\P}{{\mathbb P}} \newcommand{\Q}{{\mathbb Q}} 

\newcommand{\R}{{\mathbb R}} \newcommand{\T}{{\mathbb T}} 
\newcommand{\U}{{\mathbb U}} \newcommand{\V}{{\mathbb V}} 
\newcommand{\W}{{\mathbb W}} \newcommand{\X}{{\mathbb X}} 
\newcommand{\Y}{{\mathbb Y}} \newcommand{\Z}{{\mathbb Z}} 

\title{$F$-thresholds $c^I({\bf m})$ for projective curves}
\author{Vijaylaxmi Trivedi}
\date{}
\address{School of Mathematics, Tata Institute of Fundamental Research, 
Homi Bhabha Road, Mumbai-40005, India}
\email{vija@math.tifr.res.in}
\thanks{We acknowledge support of the Department of Atomic Energy, 
Government of India, under project no. 12-R$\&$D-TFR-RTI4001.}
\subjclass{}
\begin{abstract}
We show that if $R$ is a two dimensional standard graded ring (with the 
graded maximal ideal ${\bf m}$) of characteristic $p>0$ and $I\subset R$ is a   graded ideal 
 with $\ell(R/I) <\infty$  then
 the $F$-threshold   $c^I({\bf m})$ can be expressed in terms
of a strong HN (Harder-Narasimahan) slope of  
the canonical syzygy bundle  on $\mbox{Proj}~R$. Thus $c^I({\bf m})$ is 
a rational number.

This gives us a well defined notion, of the  $F$-threshold $c^I({\bf m})$
 in characteristic $0$, 
in terms of a HN
slope of the syzygy bundle on $\mbox{Proj}~R$.

This generalizes our earlier result (in [TrW]) where we have shown 
that if  $I$ has homogeneous 
generators of the same degree, then the $F$-threshold 
$c^I({\bf m})$ 
is expressed in terms of the minimal strong HN slope 
(in char~$p$) and  in terms of the minimal 
HN slope (in char~$0$), respectively, of the canonical 
syzygy bundle on $\mbox{Proj}~R$.

Here we also  prove that, for a given pair $(R, I)$ over a  
field of characteristic $0$, if $({\bf m}_p, I_p)$ is 
a {\em reduction mod} $p$ of $({\bf m}, I)$ then  
 $c^{I_p}({\bf m}_p) \neq c^I_{\infty}({\bf m})$ implies  
$c^{I_p}({\bf m}_p)$ has $p$ in the denominator, for almost all $p$. 

\end{abstract}

\maketitle
\section{Introduction}
Let $(R, I)$ be a standard graded pair, {\em i.e.}, $R$ is a Noetherian 
standard graded ring  over a perfect field $k$ (unless otherwise stated) of 
characteristic $p >0$ 
and $I$ is a graded ideal of finite colength. 
 Let ${\bf m}$ be the graded maximal ideal of $R$.

If $M$ is a  finitely generated graded $R$-module 
then (see [T2]) we have 
a compactly supported continuous function $f_{M, I}:[0, \infty)\longto [0, \infty)$
called  the {\em HK density function}  for $(M,I)$.
  We 
realize this function as the limit of a uniformly convergent sequence of 
compactly supported functions 
$\{f_n(M, I):\R\to [0, \infty)\}_{n\in \N}$, where  
$$f_{n}(M, I)(x) = \frac{1}{q^{d-1}}\ell(M/I^{[q]}M)_{\lfloor xq\rfloor},
~~\mbox{for}~~q=p^n.$$

Moreover $$\int_0^{\infty}f_{M, I}(x)dx = 
e_{HK}(M, I),$$
 where $e_{HK}(M, I)$ denotes the invariant 
 HK multiplicity of $M$ with 
respect to $I$ (introduced by P. Monsky [M]).

Since  the function $f_{M,I}$ is the 
uniformly  convergent limit of  the sequence $\{f_n(M,I)\}_n$,
and is  also  `additive' and  `multiplicative', 
  it has proved to be  a versatile tool to handle invariants attached to it.

The focus of this paper is on the another invariant,
the {\em maximum support} of the function $f_{R,I}$, namely  
 the number $\alpha(R, I) = \mbox{Sup}~\{x\mid f_{R,I}(x)\neq 0 \}$.
Here we  consider the standard graded pair $(R,I)$, where $R$ is a two dimensional 
domain.

In the case $\dim~R\geq 2$ this invariant relates  to another well known 
invariant, the {\em $F$-threshold} 
$c^I({\bf m})$ of ${\bf m}$ with respect to $I$: 

\vspace{5pt}

\noindent{\bf Theorem}~(Theorem~4.9, [TrW]).\quad{\em Let $(R, I)$ be a standard graded pair 
and ${\bf m}$ be the graded maximal ideal of $R$. 
If $R$ is strongly $F$-regular on the punctured
spectrum (for example if $\mbox{Proj}~R$ is smooth) then  
$\alpha(R, I) = c^I({\bf m}).$

In particular when $R$ is a normal domain of dimension two then  
$\alpha(R, I) = c^I({\bf m})$.}

\vspace{5pt} 

We recall that for a pair of 
ideals $I$ and $J$, 
the  $F$-threshold of $J$ with respect to $I$ is defined as  
$$c^{I}(J) =  \lim_{q\to \infty}
\frac{\mbox{min}~\{r\mid {J}^{r+1}\subseteq {I}^{[q]}\}}{q}.$$
This was first introduced by Musta\c{t}\u{a}-Takagi-Watanabe  in 
[MTW] for regular rings, and  later, in a more general setting (when $R$ is not 
regular), was further studied by Huneke-Musta\c{t}\u{a}-Takagi-Watanabe in [HMTW].

In [TrW] we studied $\alpha(R,I)$ ($= c^I({\bf m}))$  in detail when $I$ is 
generated by homogeneous elements of the same degree. 
In this paper we generalize  the results, proved in  [TrW] for 
 the two dimensional case, 
to the case when 
$I$  has a set of homogeneous generators, but not neccessarily 
of the same degree.
 The 
technique used in  [TrW]) does not work here. 
 We elaborate on 
this now.
  
For a given pair $(R, I)$, let $S$ be the 
normalization of $R$ in the  quotient field $Q(R)$. Then $X=\mbox{Proj}~S$ is a
nonsingular curve with the ample line bundle $\sO_X(1)$.
Analogous to the notion of the HK density function $f_{R,I}$ for the pair $(R,I)$, 
 we can have  the notion of 
the HK density function $f_{V, \sO_X(1)}$  for the pair $(V, \sO_X(1))$, where 
$V$ is a vector-bundle on $X$ and $\sO_X(1)$ the ample line bundle of $X$.
The function $f_{V, \sO_X(1))}$ 
has an explicit formula in terms of the strong HN data (see Notations~\ref{hnd}) 
of $V$. Moreover
 the maximum  support of $f_{V, \sO_X(1)}$ has an explicit formula  in 
terms of the 
minimum strong HN slope (denoted by $a_{min}(V)$) of the vector bundle $V$.

We relate the function $f_{R,I}$ with the HK density functions of 
specific vector bundles on $X$ by the formula
 $$f_{R, I}(x) =  f_{V_{0}, \sO_X(1)}(x)-f_{M_{0}, \sO_X(1)}(x),$$
where if $I$ has 
a set of homogeneous generators $f_1, \ldots, f_s$ of degree 
$d_1, \ldots, d_s$ then there is  the canonical short exact
sequence  
\begin{equation}\label{e22}0\longto V_0\longto M_0 = \oplus_i\sO_X(1-d_i) 
\longto \sO_X(1)\longto 0 \end{equation}
(see the sequence~(\ref{e2}) in subsection~2.2) of locally free sheaves of 
$\sO_X$-modules.
  We recall

\vspace{5pt}
 
\noindent{\bf Theorem~6.3}~([TrW])\quad {\em If $d_1 = \cdots = d_s$ then 
 $\alpha(R,I) = 1-a_{min}(V_0)/d$.}

\vspace{5pt}

The main point here was that the bundle 
 $M_0$ is  strongly semistable and hence  
$$a_{min}(V_0) \leq \mu(V_0) < \mu(M_0) = a_{min}(M_0),$$
where $\mu(W) = \deg(W)/\rank(W)$ denotes the slope of $W$.
 This  implied   
that $$\mbox{max Supp}~f_{M_{0}, \sO_X(1)} <\mbox{max Supp}~f_{V_{0}, \sO_X(1)} 
= 1-a_{min}(V_0)/d.$$

The above formula for $\alpha(R,I)$, in terms of the strong HN data of 
$V_0$, straightaway
gave a well defined  notion of $\alpha(R, I)$ 
(hence of $c^I({\bf m})$) in characteristic $0$, as  (by Lemma~1.16~[T1]) 
$\lim_{p_s\to \infty}a_{min}(V^{s}) = \mu_{min}(V)$, where $V^{s}$ denotes the 
{\em reduction mod} $p_s$ of the  bundle $V$.

In particular,  if $(R_s, I_s)$ is the {\em reduction mod $p_s$} of 
the pair $(R, I)$ then this implied
 $\lim_{p_s\to \infty}\alpha(R_s, I_s) = 1-\mu_{min}(V_0).$

\vspace{5pt}

 However, if $I$ is not given by a homogeneous 
set of generators of the same degrees then  $M_0$ is not a strongly semistable
(or even a semistable) bundle. It may happen, as shown by an 
example given in 
 Remark~\ref{rrr}, that  
$a_{min}(V_0) = a_{min}(M_0)$ and the functions 
$f_{M_0, \sO_X(1)}$ and $f_{V_0, \sO_X(1)}$ may coincide 
in a neighbourhood of their common maximal supports.
Hence $\alpha({R,I})$ cannot have a description as in [TrW].

In this paper we circumvent this  difficulty, 
by introducing the notion of the 
$\mu$-{\em reduction bundle} and
{\em strong} $\mu$-{\em reduction
bundle}  of $V_0$ (strictly speaking, of the exact sequence (\ref{e22}) 
of vector bundles):
Consider the HN 
filtration
$$0 =M_{l_1} \subset M_{l_1-1} \subset \cdots \subset M_0$$ 
of $M_0$ 
(hence for any $m\geq 0$, 
$0 =F^{m*}M_{l_1} \subset F^{m*}M_{l_1-1} \subset \cdots \subset F^{m*}M_0 $
is the HN filtration of $F^{m*}M_0$).
Let $0\subset V_{l_1}\subset V_{l_1-1} \subset \cdots \subset V_0$ be 
 the induced 
(this need not be the HN) filtration 
on $V_0$. Then $V_t$ is the  $\mu$-{\em reduction bundle}  of $V_0$ if 
$t$ is the least integer such that $\mu_{min}(V_t) <\mu_{min}(M_t)$.
A bundle $V_{t_0}$ is the {\em strong $\mu$-reduction bundle} of $V_0$ 
if $F^{m*}(V_{t_0})$ is the $\mu$-reduction bundle of 
$F^{m*}(V_0)$, where $m$ is an integer (such an integer does exist) 
where $F^{m*}(V_0)$ has the strong HN filtration.
Moreover we show 
$$0\longto V_{t_0}\longto M_{t_0}\longto \sO_X(1)
\longto 0$$
 is a  short exact sequence of $\sO_X$-modules. 
We show (in Theorem~\ref{n1}) 

\vspace{5pt}

\noindent{\bf Theorem~A}.~~{\it If $(R, I)$ is a two dimensional 
 standard graded pair 
with the multiplicity $d = e_0(R, {\bf m})$ and
$V_{t_0}$ is the strong $\mu$-reduction bundle of $V_0$ then 
\begin{enumerate}
\item  $f_{R, I}(x) =  f_{V_{0}, \sO_X(1)}(x)-f_{M_{0}, \sO_X(1)}(x) = 
f_{V_{t_0}, \sO_X(1)}-f_{M_{t_0}, \sO_X(1)}$
and 
\item $ \alpha(R,I) = 1 - \frac{a_{min}(V_{t_0})}{d}$.
\end{enumerate}}

Though $F^{m*}V_{t_0}$  may not be one of the bundles occuring in the
HN filtration of $F^{m*}V_0$, the slope $a_{min}(V_{t_0})$ is equal to one of the
strong HN slopes of $V_0$. In particular,   
$\alpha(R, I)$ is still given in terms of the strong HN data of $V_0$.

Moreover, we show that the notion of strong $\mu$-reduction and $\mu$-reduction bundles 
behaves well 
under {\em reduction mod $p$}. This leads to a well defined notion of 
$\alpha(R, I)$ in characteristic $0$ (Lemma~\ref{l*} and Theorem~\ref{vb1})

\vspace{5pt}

\noindent{\bf Theorem~B}.~~{\it Let $(R, I)$ be a two dimensional 
 standard graded pair  in characteristic $0$ and let 
$V_0$ be the syzygy bundle  on $X$ as in the sequence (\ref{e22}). Let 
$(A, X_A, V_A)$ 
and $(A, R_A, I_A)$  
be spreads for $(X, V_0)$ and $(R, I)$, respectively.
 If
$V_t$ is the $\mu$-reduction bundle of $V_0$
then, for a closed point  $s\in \Spec~A$, the strong $\mu$-reduction bundle of $V^s_0$ is $V^s_t$ or $V^s_{t-1}$ and
$$\lim_{p_s\to \infty}\alpha(R_s, I_s) = 1-\mu_{min}(V_t)/d.$$}

Since the notions of $\mu$-reduction and strong $\mu$-reduction  `coincide' in 
characteristic $0$, this says that $\alpha(R, I)$ is always expressed in 
terms of the minimun strong HN slope of the strong $\mu$-reduction bundle.

\vspace{5pt}

We have proved the following result  in [TrW] (Theorem~E) with the additional 
hypothesis that  either $\mbox{Proj}~R$ is nonsingular, or the ideal $I$ is 
   generated by
a set of  homogeneous generators of the same degree (see Theorem~\ref{t3}). 
However though  it is known that $\alpha(R, I) = \alpha(S, IS)$, it is 
not known to us if
$c^I({\bf m}) = c^{IS}({\bf m}S)$. 

\vspace{5pt}
\noindent{\bf Theorem~C}.~~{\it Let $(R, I)$ be a 
 standard graded pair where $R$ is a two dimensional domain. Then  
$$c^I({\bf m}) = \alpha(R, I).$$
In particular $c^I({\bf m}) = c^{IS}({\bf m}S)$, where $S$ denotes
the normalization of $R$ in $Q(R)$.}   

\vspace{5pt}
The following theorem is proved in [TrW] (Theorem~C) when $I$ is 
 an ideal  generated by a set of 
 homogeneous generators of the same degree (see subsection~4.2).

\vspace{5pt}
\noindent{\bf Theorem~D}.~~{\it Let $(R, I)$ be a 
 standard graded pair where $R$ is a two dimensional domain  in 
characteristic $0$ with notations as in Theorem~B,
 then  
\begin{enumerate}
\item $c_{\infty}^I({\bf m}):= \lim_{p_s\to \infty}
c^{I_s}({\bf m}_s)\quad\mbox{exists and}$
\item  For  $p_s\gg 0$,  $c^{I_s}({\bf m}_s) \geq c_{\infty}^I({\bf m})$.
\item If   $V_0$ 
is semistable then
\begin{enumerate}
\item $c_{\infty }^I({\bf m}) = (d_1+\cdots + d_r)/({r-1})$, where 
$M_0 = \oplus_{i=1}^r\sO_X(1-d_i)$
 and
\item for $p_s \gg0$,  
$$c^{I_{s}}({\bf m}_{s}) = c_{\infty }^I({\bf m})
\iff V_0^s~~\mbox{is strongly semistable}.$$
\end{enumerate}
\end{enumerate}}

In particular the $F$-threshold of the {\em reduction mod}~$p_s$, $c^{I_s}({\bf m}_s)$,
 characterizes the  
strong semistability behaviour of the syzygy bundle $V_0$ under 
{\em reduction mod}~$p_s$. 

\vspace{5pt}

Next we analyse the case when $c^{I_{s}}({\bf m}_{s}) \neq 
c_{\infty }^I({\bf m})$.
By Theorem~3.4 and Proposition~3.8 of [HY], where 
 $R=\Z[X_1, \ldots, X_n]$ and $I\subseteq {\bf m} = (X_1, \ldots, X_n)$,
we have
a formula for the log canonical threshold in terms of $F$-pure thresholds (where
$\mbox{fpt}_{\bf m}(I)$ = $c^{\bf m}(I)$ denotes the first jumping 
number of $I$):
$$\mbox{lct}_{\bf m}(I) = \lim_{p\to \infty}
\mbox{fpt}_{{\bf m}_p}(I_p) = \lim_{p\to \infty}c^{{\bf m}_p}(I_p),$$
where ${\bf m}_p$ and $I_p$ are  {\em reductions mod}~$p$ of ${\bf m}$ and $I$,
respectively.

\vspace{5pt}
 K.Schwede asked the following
question: Assuming $\mbox{fpt}_{{\bf m}_p}(f_p)\neq \mbox{lct}_{\bf m}(f)$, is
the
denominator of $\mbox{fpt}_{{\bf m}_p}(f_p)$ (in its reduced form)
 a multiple of $p$?

\vspace{5pt}
In [CHSW] the authors explored the implication of the following two conditions:

\noindent{(1)} the characteristic does not divide the denominator of the $F$-pure threshold.
(2) The $F$-pure threshold and the log canonical threshold coincide.
Theorem~A in [CHSW] and also
the example~4.5 in [MTW] imply that for
 an explicit (nonhomogeneous) polynomial $f$ in a polynomial ring
(note that here  the $F$-pure
threshold $\mbox{fpt}_{{\bf m}_p}(f_p) = c^{{\bf m}_p}(f_p)$), the above two
conditions
could be distinct.

On the other hand, there are examples (see [CHSW] for the references)
of homogeneous polynomials
$f$ of specific types where the two conditions are equivalent.
In [BS] Proposition~5.4, it was shown that for  a
homogeneous
polynomial $f$ of degree $d$ in $R = k[X_0, \ldots, X_n]$
(where $R/(f)$ is an isolated singularity), if $p\geq nd-d-n$ then either
$c^{{\bf m}_p}(f_p) = (n+1)/d$, or  the denominator of $c^{{\bf m}_p}(f_p)$
is a power of $p$. In other words
$$c^{{\bf m}_p}(f_p) \neq  \mbox{lct}_{\bf m}(f) \implies~~
\mbox{the denominator of}~~c^{{\bf m}_p}(f_p)~~\mbox{is a power of}~~p.$$

In this context, here we prove the following (in Section~5).

\vspace{5pt}

\noindent{\bf Theorem~E}.~~{\it Let $(R,I)$ be a standard graded pair, where 
$R$ is a $2$ dimensional domain over an algebraically
closed field $k$ of char $0$.
Let $(R_s, I_s, {\bf m}_s)$ denote  {\em reductions mod $p_s$} of 
$(R, I, {\bf m})$, where  $p_s = \Char~R_s$. Let 
$c^I_{\infty}({\bf m}) = \lim_{p_s\to \infty} c^{I_s}({\bf m}_s)$. Then
for $p_s\gg0$, 
$$ c^{I_s}({\bf m}_s) \neq c^I_{\infty}({\bf m}) \implies
c^{I_s}({\bf m}_s) =   {a_1}/{p_sb_1},$$
 where  $a_1, b_1\in \Z_{+}$ and $\mbox{g.c.d.}(a_1, p_s) = 1$.

In fact, for $p_s\gg 0$,
$$c^{I_s}({\bf m}_s) \neq c^I_{\infty}({\bf m}) \implies
c^{I_s}({\bf m}_s) =  c^I_{\infty}({\bf m}) +\frac{a}{p_s b},$$
for some   $a, b\in \Z_{+}$ such that  
$0< a/b \leq 4(g-1)(r-1)$, where $r+1 =$ the minimal generators of $I$ and
$g =$ the genus of $\mbox{Proj}~R$.}

\vspace{5pt}
However,  there exist examples (Remark~\ref{r3}) where, for all but 
finitely many $p_s$, 
 the  denominators (in its reduced form) of $c^{{\bf m}_s}({\bf m}_s)$
is divisible by $p_s$, but is  not a power of $p_s$.

The organisation of this paper is as follows.

In Section~2, we give a description of the HK density 
function $f_{R,I}$ in terms of the HK density functions of the syzygy 
vector bundles.  Most of the details given here are a rephrasing of the 
details given in  [TrW].

In Section~3 we
introduce  the notion of {\em $\mu$-reduction} and {\em strong $\mu$-reduction} 
bundles, for a choice of 
the sequence of the type (\ref{e22}) (this is a key new idea in the paper).

Then we prove the existence of the $\mu$-reduction and the strong 
$\mu$-reduction bundles,  and 
check  the relevant properties, such as the HN filtration and 
the HK density function of $V_t$ vis-a-vis the HN filtration and the 
HK density function of $V_0$,  
the  relation between the $\mu$-reduction bundle of $V_0$  
and  the 
$\mu$-reduction bundle of $F^{s*}(V_0)$, where $F^s$ is the $s^{th}$-iterated 
 Frobenius map.

In Section~4 
we prove the equality  $c^I({\bf m}) = \alpha(R, I)$ and express this quantity 
 in terms of the minimum strong HN slope of the strong 
$\mu$-reduction bundle of $V_0$. Also in characteristic $0$, we 
realize $c^I_{\infty}({\bf m})$ ($= \alpha^{\infty}(R, I)$) in terms of the 
minimum HN slope of the $\mu$-reduction bundle of $V_0$.

In Section~5, we use the above mentioned characterization  of 
$c^I_{\infty}({\bf m})$ and $c^I({\bf m})$ in terms the invariants   of 
a  vector bundle on 
$\mbox{Proj}~R$, to deduce Theorem~E.

\section{The HK density function in dimension $2$}

Let $X$ be a nonsingular projective curve over an algebraically  
closed field $k$.

We recall the following notations from [TrW]. For details we 
refer the reader to Section~5 of [TrW].

\begin{notations}\label{hnd}Let $V$ be a vector bundle on  $X$. The slope 
of $V$ is $\mu(V) = \deg~V/\rank~V$.
 \begin{enumerate}
\item  The set 
$(\{\mu_1, \mu_2, \cdots, \mu_{t+1}\}, \{r_1, 
\ldots, r_{t+1}\})$ is called
the {\em HN data} of $V$ if
$V$ has the HN filtration  
$$ 0 = F_0 \subset F_1 \subset \cdots \subset 
F_t \subset F_{t+1}= V,$$
with $\mu_i = \mu({F_i}/{F_{i-1}})$
and 
$r_i = \rank({F_i}/{F_{i-1}})$. We call $\mu_i$ 
a {\em HN slope} of $V$ and $r_i$  a {\em HN rank} of $V$. 

We  
denote the minimum HN slope of $V$ by $\mu_{min}(V) = \mu({V}/{F_t})$.

\item If characteristic~$k = p  >0$, then 
$\left(\{a_1, \ldots, a_{l+1}\},
 \{{\tilde r}_1, \ldots, {\tilde r}_{l+1}\}\right)$
is called the {\em strong HN data} of $V$, where 
$m>0$  is an integer
  such that $F^{m*}V$ has the strong HN filtration 
(such an integer $m>0$ exists by Theroem~2.7 of [L])
$$ 0 = E_0 \subset E_1 \subset \cdots \subset 
E_l \subset E_{l+1}= F^{m*}V$$
and 
$a_i = (1/p^m)\mu({E_i}/{E_{i-1}})$ and 
${\tilde r}_i = \rank(E_{i+1}/E_{i}).$. We call $a_i$ a 
 {\em strong HN slope} 
of $V$ and $r_i$  a  strong HN rank of $V$.

 We  denote the minimum strong HN slope 
of $V$ by 
$a_{min}(V) = (1/p^m)\mu({E_{l+1}}/{E_l})$.
 \end{enumerate}
\end{notations}

\begin{rmk}\label{r5}Let $\sO_X(1)$ be an ample line bundle of degree $d$ on 
$X$. Let ${\tilde E}$ be a semistable vector-bundle on $X$ with 
$\mu({\tilde E}) = \mu$ and $\rank({\tilde E}) = r$.
  Then by Serre duality 
$$\begin{array}{lcl}
 m < -{\mu}/{d} & \implies & h^1(X, {\tilde E}(m))
= -r(\mu+dm+(g-1))\\
-{\mu}/{d} \leq m \leq - {\mu}/{d} + (d-3) & \implies &
h^1(X, {\tilde E}(m)) =  C\\
-{\mu}/{d} + (d-3) < m & \implies &
h^1(X, {\tilde E}(m)) =  0,
\end{array}$$
where $|C| \leq r(g-1)$ and $g = \mbox{genus}(X)$.
\end{rmk}

\subsection{The HK density functions for vector bundles on curves}

Let $X$ be a nonsingular projective curve  over an 
algebraically closed  field of characteristic $p>0$. Let $\sO_X(1)$ be an
ample line bundle of degree $d$ on $X$.
Let $V$ be a  vector bundle on $X$. 

We recall the definition ((6.1) in [TrW]) of  the HK density 
function of $V$ with respect to $\sO_X(1)$.
 Let $f_n(V,\sO_X(1)):\R\longto [0, \infty)$ be given by (where $q=p^n$) 
$$f_n(V,\sO_X(1))(x) = \frac{1}{q}h^1(X, F^{n*}V(\lfloor (x-1)q\rfloor)).$$
and let
\begin{equation}\label{fv}
f_{V, \sO_X(1)}:\R\longto [0, \infty)~~~\mbox{given by}~~~ 
x \to  \lim_{n\to \infty}f_n(V,\sO_X(1))(x)
\end{equation} 
The function $f_{V,\sO_X(1)}$ is 
 well defined and continuous (though need not be compactly supported).

\begin{rmk}\label{*1}Later in the paper, we will use the following 
formula (given in terms 
of the strong HN data 
$(\{a_1, \ldots, a_{l+1}\}, \{r_1, \ldots, r_{l+1}\})$ of $V$.

We choose 
  $n_1>0$ such that 
$F^{n_1*}V$ has the strong HN filtration
$$0 = E_{0} \subset E_{1} \subset \cdots \subset E_{l} \subset E_{l+1} =
F^{n_1*}V, $$
where $a_i = (1/p^{n_1})\mu(E_i/E_{i-1})$ and 
$r_i  = \rank(E_i/E_{i-1})$.

Since $a_1 > a_2 > \cdots > a_{l+1}$, we can choose $q \gg 0$ 
($q_1 = p^{n_1}$) such that 
$$-\frac{ a_1qq_1}{d} < 
-\frac{ a_1qq_1}{d}+(d-3) < -\frac{a_2qq_1}{d} < 
-\frac{ a_2qq_1}{d}+(d-3) <\cdots < 
-\frac{ a_{l+1}qq_1}{d}.$$

\vspace{5pt}

\noindent{(1)}\quad By Remark~\ref{r5} (where $q=p^n$)
$$qq_1f_{n+n_1}(V,\sO_X(1))(\frac{m}{qq_1}) = h^1(X, F^{n+n_1*}V (m-qq_1)) = 
\sum_{i=1}^{l+1}h^1(X, F^{n*}(E_i/E_{i-1})(m-qq_1)).$$

If $g = \mbox{genus} (X)$ and $R_i = r_i\left[a_i+d(\frac{m}{qq_1}-1)+\frac{(g-1)}{qq_1}\right]$ then we have 
\vspace{5pt}

$$f_{n+n_1}(V,\sO_X(1))(\frac{m}{qq_1}) = \begin{cases} 
-\sum_{i=1}^{l+1}R_i & \mbox{for}\quad \frac{m}{qq_1} < 
1-\frac{a_1}{d}\\\\
 \frac{C_1}{qq_1}-\sum_{i=2}^{l+1}R_i & \mbox{for}\quad
1-\frac{a_1}{d} \leq \frac{m}{qq_1} < 1 -\frac{a_1}{d}+\frac{(d-3)}{qq_1}\\\\
-\sum_{k=i+1}^{l+1}R_k & \mbox{for}\quad  1-\frac{a_i}{d}+\frac{(d-3)}{qq_1}
\leq  \frac{m}{qq_1} < 1-\frac{a_{i+1}}{d}\\\\
\frac{C_{i+1}}{qq_1}-\sum_{k=i+2}^{l+1}R_k & \mbox{for}\quad  
1-\frac{a_{i+1}}{d}\leq  \frac{m}{qq_1} 
\leq 1-\frac{a_{i+1}}{d} +\frac{(d-3)}{qq_1}\\\\
 0 & \mbox{for}\quad 1 -\frac{a_{l+1}}{d} +\frac{(d-3)}{qq_1} \leq 
\frac{m}{qq_1}, 
\end{cases}$$
where  $|C_i|\leq \rank(V)(g-1)$ for all $i$ and $a_{l+1} = 
 a_{min}(V)$.

\vspace{5pt}

\noindent{(2)}\quad Taking limit as $n\to \infty$, we get the formula for $f_{V, \sO_X(1)}$:
$$f_{V,\sO_X(1)}(x) = 
\begin{cases}
-\left[\sum_{i=1}^{l+1}a_ir_i+d(x-1)r_i\right]
& \quad\mbox{for}\quad x < 1-a_1/d\\\ 
- \left[\sum_{k={i+1}}^{l+1}a_kr_k+d(x-1)r_k\right] 
& \quad\mbox{for}\quad 1-a_i/d  \leq x < 1-a_{i+1}/d.\end{cases}$$

\vspace{5pt}

\noindent{(3)}\quad   
$\mbox{Support}~f_{V,\sO_X(1)} \subseteq \mbox{the interval}~(-\infty, 1-a_{min}(V)/d]$
and 
\begin{equation}\label{*3}\alpha(V, \sO_X(1)) := \mbox{Sup}~\{x\mid 
f_{V, \sO_X(1)}(x) >0\}
 = 1-\frac{a_{min}(V)}{d}.\end{equation}
\end{rmk}

\begin{rmk}Replacing $R$ by $R\tensor_k{\bar k}$ does not change the function
$f_{R,I}$ and the semistability behaviour of any vector bundle $V$ on 
$X= \mbox{Proj}~R$. Therefore   we can assume, without loss of generality, that the 
underlying field $k$ is algebraically closed.\end{rmk}

\subsection{The HK density functions of $f_{R,I}$ and the syzygy vector bundles} 

 Let $(R,I)$ be  a  standard graded pair, where $R$ is a domain defined over 
a field of characteristic $p>0$.

Let $S = \oplus_m S_m$ be the integral closure of $R$ in its quotient field.
Then the  inclusion map $\pi:R\longrightarrow S$  is 
a graded finite map of degree $0$, where $S$ is a normal domain and $Q(R) = Q(S)$.
The additivity of the HK density function (Proposition~2.14 of [T2]) 
implies that 
$$f_{R, I}(x) = f_{S, I}(x) = \lim_{n\to \infty} f_n(x) = 
\lim_{n\to \infty}\frac{1}{q} \ell\left(\frac{S}{I^{[q]}S}\right)_{\lfloor 
xq\rfloor}.$$
Since $R$ is a standard graded ring over $k$, the canonical embedding 
 $Y = {\rm Proj}~R \longto \P_k^n$ 
gives the very ample line bundle $\sO_Y(1)$ on  $Y$.
Let  $X = {\rm Proj}~S$  with the canonical map  $\pi:X\longto Y$ and 
let $\sO_X(1) =  \pi^*\sO_Y(1)$ be the ample line bundle on $X$.

Note that $X$  is a nonsingular projective curve.   
 For a choice of homogeneous generators $h_1, \ldots, h_\mu$ of $I$ 
of degrees  $d_1, \ldots, d_{\mu}$, we have
the canonical (locally split) exact
 sequence of locally free sheaves of $\sO_X$-modules
\begin{equation}\label{e2}
0\longto V_0 \longto M_0 = \oplus_{i=1}^{\mu}\sO_X(1-d_i)\longto \sO_X(1)
\longto 0,\end{equation}
where the map 
$\sO_X(1-d_i)\longto \sO_X(1)$ is given by the multiplication by the
 element  $h_i$.

Since, for $q = p^n \gg 0$, 
 $$f_n\left(\frac{m+q}{q}\right) = \frac{1}{q} 
\ell\left(\frac{S}{I^{[q]}S}\right)_{m+q} = 
 \frac{1}{q}\left[h^1(X, (F^{n*}V_0)(m)) -
 h^1(X, (F^{n*}M_0)(m))\right]$$
we have
\begin{equation}\label{fr}f_{R, I}(x) = 
f_{V_0, \sO_X(1)}(x)-f_{M_0, \sO_X(1)}(x), ~~\mbox{for}~~x \geq 1.\end{equation}

If $a_{min}(V_0) < a_{min}(M_0)$ then by (\ref{*3})
$\alpha(R,I) = 1-a_{min}(V_0)/d$.
This  holds true when 
 $d_1 = \cdots = d_s$,  as  $M_0$ is strongly semistable and therefore
$\mu(M_0) = a_{min}(M_0)$ and $a_{min}(V_0) \leq \mu(V) < \mu(M_0)$. 

However, it may happen that  $a_{min}(V_0) = a_{min}(M_0)$ and the 
HK density functions for $V_0$ and $M_0$ may coincide in a 
neighbourhood of their common maximum support point (see   
Remark~\ref{rrr}).

In the next section we introduce  the 
notion of $\mu$-{\em reduction} and the strong $\mu$-{\em reduction} for a short 
exact sequence of type (\ref{e2}). 
Using the strong $\mu$-reduction bundle $V_{t_0}$ ($\subset  V_0$) we replace the 
short exact
sequence (\ref{e2}) by 
another sequence
$$0\longto V_{t_0}\longto M_{t_0}\longto \sO_X(1)\longto 0$$ such that 
\begin{enumerate}
\item  $a_{min}(V_{t_0}) < a_{min}(M_{t_0})$ and 
\item  $f_{V_{0}, \sO_X(1)}-f_{M_{0}, \sO_X(1)} = 
f_{V_{t_0}, \sO_X(1)}-f_{M_{t_0}, \sO_X(1)}$ and 
\item  $a_{min}(V_{t_0})$ occurs in the strong HN data of $V_0$.
\end{enumerate}

In particular, we  express 
$\alpha(R, I)$   in terms of one of the  strong HN slopes of 
the syzygy  vector bundle $V_0$. 
In characteristic~$0$, using the $\mu$-reduction bundle $V_t$ (whose 
minimum HN slope occurs in the HN data of $V_0$) we are able to 
express $\alpha^{\infty}(R, I)$ (the maximum support point of the 
HK density function in characteristic $0$)
in terms of one of the HN slopes of $V_0$.

Using this formula for $\alpha(R, I)$, in terms of the strong HN data of 
a single vector bundle, and  Remark~\ref{*1}~(1), 
we are able to  prove the  equality 
$c^I({\bf m}) = \alpha(R, I)$.
This enables us to  study 
 various properties (Theorem~D and  Theorem~E)  of  
the $F$-thresholds $c^I({\bf m})$  and $c^I_{\infty}({\bf m})$ for two 
dimensional standard graded pair $(R, I)$.

\section{$\mu$-reduction and strong $\mu$-reduction bundles}

Let $X$ denote a nonsingular projective curve with an ample line 
bundle $\sO_X(1)$ of  degree $d$ over a 
field $k$ of arbitrary characteristic and let
\begin{equation}\label{*p}0\longto V_0\longby{f_0} 
M_0 = \oplus_{i=1}^\mu\sO_X(1-d_i) \longto \sL = \sO_X(1)\longto 0
\end{equation} be a 
short exact sequence of  sheaves of $\sO_X$-modules, where 
$d_1\leq d_2\cdots \leq d_\mu$ are positive intergers and 
where 
the map $\sO_X(1-d_i) \longto \sO_X(1)$ is 
a multiplication map given by $h_i\in H^0(X, \sO_X(d_i)$.

\begin{notations}\label{mu}
For the sequence~(\ref{*p}), we denote the  
HN filtration of $M_0$ by
$$0 \subset M_{{l_1}-1} \subset  \cdots \subset M_0 \quad\mbox{and let}$$
$$ V_{l_1-1} \subseteq \cdots 
\subseteq V_1 \subseteq V_0$$
denote the induced (need not be the HN) filtration on $V_0$, where 
$V_i = M_i\cap V_0$.
 For every 
$0\leq i\leq l_1-1$, let 
$f_i: V_i\longto  M_i$ be the 
canonical inclusion map.

Here $M_i$ can explicitly be given as follows:
Let  $\{d_1, \ldots, d_\mu\} = \{{\tilde d_1},\ldots, {\tilde d_{l_1}}\}$, 
with ${\tilde d_{l_1}} > {\tilde d_{l_1-1}} >\cdots >{\tilde d_1}$.
Then  $M_{l_1-i} = \oplus \sO_X(1-{\tilde d_1})\oplus \cdots \oplus 
\sO_X(1-{\tilde d_i})$.
In particular  $\mu_{max}(M_0) = (1- {\tilde d_1})d$ and 
$\mu_{min}(M_0) = (1-{\tilde d}_{l_1})d$, where $d$ is the degree of $\sL$.

It is easy to check that 
 the bundle $V_{l_1-1} = 0$ iff $M_{l_1-1}$ is a  line bundle.

\end{notations}

\subsection{The $\mu$-reduction bundle}

\vspace{5pt}

\begin{defn}\label{mur}
The bundle $V_t$ is the {\em $\mu$-reduction bundle} of $V_0$ 
(of sequence~(\ref{*p}) if $t < l_1$ such that $V_t \neq 0$ and 
\begin{enumerate}
\item $\mu_{min}(V_i) = \mu_{min}(M_i)$ for  $i < t$
  and 
\item $\mu_{min}(V_t) <\mu_{min}(M_t)$.\end{enumerate}

Strictly speaking we should be refering to $V_t$ as the $\mu$-reduction bundle of 
the sequence (\ref{*p}) as the notion depends on the  sequence
(\ref{*p}) too. Since  in the paper there would not be any
 ambiguity about the associated 
sequence, we will refer the bundle $V_t$ as the $\mu$-reduction 
bundle of $V_0$.
\end{defn}

\vspace{5pt}

Next we prove relevant properties of the filtration~$\{V_i\}_i$ 
and then  prove  the existence of the $\mu$-reduction bundle of $V_0$.

We would repeatedly use the following two obvious properties of the  
sequence~(\ref{*p}).
(1)  The induced map $M_{l_1-1}\longto \sL$ is nonzero 
and (2) $\mu_{max}(M_0) = \mu(M_{l_1-1}) < \mu(\sL)$.

\begin{rmk}\label{r4} The following are well known facts (can also 
refer to Remark~5.5 in [TrW]).
\begin{enumerate}
 \item If $0\longto V'\longto V\longto V''\longto 0$ is a short exact
sequence of nonzero
vector bundles on $X$, then
\item either $\mu(V') \leq \mu(V) \leq \mu(V'')$
or $\mu(V') \geq \mu(V) \geq \mu(V'')$.
\item For a nonzero  map of bundles  $E\longto W$,
 where $W$ is semistable,
 $\mu_{min}(E) \leq \mu(W)$.
In particular,  if $0\longto V'\longto V\longto V/V'\longto 0$ is an exact 
sequence  of nonzero bundles such that $V/V'$ is semistable and  
 $W\subseteq V$ is a nonzero bundle such that $\mu_{min}(W) > 
\mu(V/V')$ then  $W\subseteq V'$.
\item For a nonzero bundle $V$ on $X$, we have $\mu_{min}(F^{m*}(V)) \leq p^m\mu_{min}(V)$, for any $m\geq 1$.
\end{enumerate}
\end{rmk}

\begin{lemma}\label{lmu} \begin{enumerate}
\item The sequence 
$V_{l_1-1} \subset \cdots \subset V_1 \subset V_0$
is a sequence of distinct subbundles and
\item $\mu_{min}(V_j) \leq \mu_{min}(M_j)$, for $0\leq j < l_1-1$ and 
same holds for $j=l_1-1$ if the bundle $V_{l_1-1}$ is nonzero.
\item If $i < l_1-1$ such that $\mu_{min}(V_j) = \mu_{min}(M_j)$, for
$0\leq j\leq i$, then the canonical sequence 
 $$0\longto V_{i+1} \longby{f_{i+1}} M_{i+1}\longto \sL\longto 0$$
is a short exact sequence
and  $V_j/V_{j+1} \simeq M_j/M_{j+1}$, for 
$0\leq j\leq i$.
\end{enumerate}
\end{lemma}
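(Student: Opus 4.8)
\textbf{Proof proposal for Lemma~\ref{lmu}.}
The plan is to prove the three statements in order, feeding each into the next, and using throughout the two ``obvious properties'' of the sequence~(\ref{*p}): that $M_{l_1-1}\longto\sL$ is nonzero and that $\mu_{max}(M_0) = \mu(M_{l_1-1}) < \mu(\sL)$, together with the slope inequalities collected in Remark~\ref{r4}. For part~(1), I would show that each inclusion $V_{i+1} = V_i\cap M_{i+1} \subseteq V_i$ is strict. If instead $V_i = V_{i+1}$, then $V_i \subseteq M_{i+1}$, so the composite $V_i \inj M_i \surj M_i/M_{i+1}$ is zero; but $M_i/M_{i+1}$ is (up to a twist) the semistable quotient $\sO_X(1-\tilde d_{?})$-summand of lowest slope appearing at that stage, and on the other hand $M_i/V_i$ injects into $M_0/V_0 = \sL$, a line bundle. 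Comparing ranks: $\rank(M_i/V_i) \le 1$ while $\rank(M_i/M_{i+1}) = r_i \ge 1$, and a rank count using the fact that $V_i \subseteq M_{i+1}$ forces $\rank M_i - \rank M_{i+1} \le \rank M_i - \rank V_i \le 1$, so the only way this can fail strictness is if the last graded piece of $M_0$ is a line bundle and we are already at the bottom $i = l_1-1$ --- exactly the excluded borderline case recorded in Notations~\ref{mu} ($V_{l_1-1}=0$ iff $M_{l_1-1}$ is a line bundle). So all the listed inclusions among $V_0,\dots,V_{l_1-1}$ are strict.

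For part~(2), fix $j$ and look at the quotient $W_j := M_j/V_j$, which injects into $\sL$ and hence is either $0$ or a line bundle of degree at most $d$ (more precisely $\deg W_j = d - (\text{drop in degree})$, but I only need $\rank W_j \le 1$). From the exact sequence $0\to V_j \to M_j \to W_j \to 0$, if $W_j = 0$ then $V_j = M_j$ and there is nothing to prove; if $W_j$ is a line bundle, then $\mu_{min}(V_j) \le \mu_{min}(M_j)$ follows by restricting the HN filtration: the last graded piece of $M_j$ either survives into $V_j$ (giving equality of minimal slopes) or gets partially cut by the rank-one quotient, in which case one uses Remark~\ref{r4}(ii)--(iii) applied to $0 \to V_j \to M_j \to W_j \to 0$ to see the minimal slope can only drop. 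I would phrase this cleanly by noting that the bottom piece $E$ of the HN filtration of $M_j$ (a semistable bundle of slope $\mu_{min}(M_j)$) maps to $W_j$; its kernel $E \cap V_j$ is a subsheaf of $V_j$ of slope $\ge \mu(E) = \mu_{min}(M_j)$ if the map $E \to W_j$ is zero (then $\mu_{min}(V_j) \le \mu(E\cap V_j)$ needs care) --- actually the robust argument is: $\mu_{min}(V_j) \le \mu(V_j) \le \max(\mu(M_j), \mu(W_j))$ is not quite enough, so I would instead dualize and use that $V_j^\vee$ is a quotient of $M_j^\vee$ of corank $\le 1$, whence $\mu_{max}(V_j^\vee) \le \mu_{max}(M_j^\vee)$ up to the line-bundle correction, i.e. $\mu_{min}(V_j) \le \mu_{min}(M_j)$. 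This is the step where I expect to spend the most care, pinning down the rank-one comparison correctly; it is the main obstacle, though not a deep one.

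For part~(3), the hypothesis is that $\mu_{min}(V_j) = \mu_{min}(M_j)$ for all $0 \le j \le i$ with $i < l_1 - 1$. I want to show the map $f_{i+1}: V_{i+1} \to M_{i+1}$ fits into a short exact sequence $0 \to V_{i+1} \to M_{i+1} \to \sL \to 0$, equivalently that the composite $M_{i+1} \inj M_0 \surj \sL$ is surjective (its kernel is then automatically $V_{i+1} = V_0 \cap M_{i+1}$, and surjectivity of sheaves of $\sO_X$-modules onto a line bundle on a curve is surjectivity). I would argue by downward induction / directly: since $\mu_{min}(M_0) = (1-\tilde d_{l_1})d < 0 \le \dots$, and more to the point the induced map $M_{l_1-1} \to \sL$ is nonzero (obvious property~(1)) --- but I actually need it for $M_{i+1}$, not $M_{l_1-1}$. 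The key point: if $M_{i+1} \to \sL$ were \emph{not} surjective, it would be the zero map (target a line bundle, source containing the lowest-slope part), forcing $\sL$ to be a quotient of $M_0/M_{i+1}$; but then $V_i \supseteq M_{i+1}$ would give $\rank(M_i/V_i) \le \rank(M_i/M_{i+1})$ and, comparing with $M_i/V_i \inj \sL$, one computes $\mu_{min}(V_i) < \mu_{min}(M_i)$, contradicting the hypothesis at level $j = i$. Hence $M_{i+1} \to \sL$ is onto, giving the asserted short exact sequence; and then the snake lemma applied to
$$
\begin{CD}
0 @>>> V_{i+1} @>>> M_{i+1} @>>> \sL @>>> 0\\
@. @VVV @VVV @| @.\\
0 @>>> V_i @>>> M_i @>>> \sL @>>> 0
\end{CD}
$$
shows $V_i/V_{i+1} \xrightarrow{\ \sim\ } M_i/M_{i+1}$; iterating down from $j = i$ to $j = 0$ gives $V_j/V_{j+1} \simeq M_j/M_{j+1}$ for all $0 \le j \le i$, as claimed.

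Throughout, the only genuinely delicate bookkeeping is the rank-one comparison in part~(2) and the contrapositive slope computation in part~(3); everything else is snake-lemma diagram chasing plus Remark~\ref{r4}. I would present (1) and (2) quickly and spend the bulk of the written proof making the degree/rank accounting in the borderline ($M_{l_1-1}$ a line bundle) and non-borderline cases airtight.
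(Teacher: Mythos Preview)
Your proposal has two genuine gaps, both stemming from not using the induced filtration $V_j/V_{j+1} \hookrightarrow M_j/M_{j+1}$.

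\textbf{Part~(2).} You are working much too hard, and the dualization step is wrong: from $M_j^\vee \twoheadrightarrow V_j^\vee$ you claim $\mu_{max}(V_j^\vee) \le \mu_{max}(M_j^\vee)$, but this is false for quotients in general, and in any case it would translate to $\mu_{min}(V_j) \ge \mu_{min}(M_j)$, the reverse of what you want. The one-line argument you are missing is: since $V_{j+1} = V_j \cap M_{j+1}$, the quotient $V_j/V_{j+1}$ injects into the \emph{semistable} bundle $M_j/M_{j+1}$, so
\[
\mu_{min}(V_j) \;\le\; \mu(V_j/V_{j+1}) \;\le\; \mu(M_j/M_{j+1}) \;=\; \mu_{min}(M_j),
\]
the first inequality because $V_j/V_{j+1}$ is a nonzero quotient of $V_j$, the second by semistability of the target. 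This is exactly the paper's proof.

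\textbf{Part~(3).} The step ``if $M_{i+1}\to\sL$ were not surjective, it would be the zero map'' is false: a nonzero map to a line bundle on a curve can have image a proper sub-line-bundle $\sL(-D)$ with $D>0$. The paper does not try to rule this out directly. It argues inductively on the cokernel: assuming $\coker f_j = \sL$, one has an exact sequence
\[
0 \longto V_j/V_{j+1} \longto M_j/M_{j+1} \longto \sL/\coker f_{j+1} \longto 0
\]
with torsion last term. The hypothesis $\mu_{min}(V_j)=\mu_{min}(M_j)$, fed into the chain of inequalities from~(2), forces $\mu(V_j/V_{j+1}) = \mu(M_j/M_{j+1})$; since the ranks already agree (torsion quotient), the degrees agree, so $\ell(\sL/\coker f_{j+1})=0$ and $\coker f_{j+1}=\sL$. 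This same computation gives $V_j/V_{j+1}\simeq M_j/M_{j+1}$ for free. Your snake-lemma diagram is fine, but it is only available \emph{after} both rows are known to be short exact, which is precisely the point at issue.

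\textbf{Part~(1).} Your rank count only shows that $V_i=V_{i+1}$ forces $\rank(M_i/M_{i+1})\le 1$, not that $i=l_1-1$; any HN graded piece of $M_0$ can have rank~$1$. The paper's argument is cleaner: if $V_i=V_{i+1}$ then $M_i/M_{i+1}$ is the quotient $(M_i/V_i)\big/(M_{i+1}/V_{i+1})$ of two nonzero rank-one subsheaves of $\sL$, hence torsion, contradicting local freeness.
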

\begin{proof}For $0\leq i \leq l_1-1$,
the induced map $M_i\longto \sL$ is nonzero and  factors through 
the injective map $M_i/f_i(V_i)\longto \sL$.
This implies $\coker~f_i \neq 0$,
 for every  $0\leq i \leq l_1-1$.

\vspace{5pt}
\noindent{(1)}\quad 
If $V_i = V_{i+1}$, for some $i < l_1-1$  then we have $M_i/M_{i+1} \simeq 
\coker~f_i/\coker~f_{i+1}$, where 
$\coker~f_i/\coker~f_{i+1}$ is a subquotient (but not a subsheaf) of $\sL$, and hence a torsion-sheaf of 
$\sO_X$-modules, on the other hand $M_i/M_{i+1}$ 
is a nonzero locally free sheaf. Hence $\coker~f_i/\coker~f_{i+1} = 0$.
\vspace{5pt}

\noindent{(2)}\quad This follows as 
$0\longto V_i/V_{i+1}\longto M_i/M_{i+1}$ implies 
$$\mu_{min}(V_i) \leq \mu(V_i/V_{i+1})\leq \mu(M_i/M_{i+1}) 
= \mu_{min}(M_i).$$ 

\noindent{(3)}\quad Note that $\coker f_0 = \sL$. 
 It is enough to prove that if there is $l_1-1 > j\geq 0$ such 
that $\mu_{min}(V_j) = \mu_{min}(M_j)$ and $\coker f_j = \sL$ then 
$\coker~f_{j+1} = \sL$ and $V_j/V_{j+1} \simeq M_j/M_{j+1}$.
Consider the short exact sequence 
$$0\longto V_j/V_{j+1}\longto M_j/M_{j+1}\longto \sL/\coker f_{j+1}\longto 0,$$
Now $\sL/\coker f_{j+1}$ is a torsion sheaf. Also 
$\mu(V_j/V_{j+1}) = \mu(M_j/M_{j+1})$ (as argued in (2)).
Therefore 
$$\rank~\frac{V_j}{V_{j+1}} = \rank~\frac{M_j}{M_{j+1}}\implies  
\deg~\frac{V_j}{V_{j+1}} = \deg~\frac{M_j}{M_{j+1}}.$$
 Hence $\deg~(\sL/\coker~f_{j+1})= 
\ell(\sL/\coker~f_{j+1}) = 0$ which implies
$\coker~f_{j+1} = \sL$ and 
hence $V_j/V_{j+1} \simeq M_j/M_{j+1}$.
\end{proof}

\begin{propose}\label{L0} The bundle $V_0$ has 
 $\mu$-reduction bundle $V_t$, for some $t<l_1$.
\end{propose}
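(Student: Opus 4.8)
The plan is to reduce the proposition to a single assertion and then prove that assertion by descending through the filtration $V_{l_1-1}\subseteq\cdots\subseteq V_1\subseteq V_0$. The assertion is: there is an index $j$ with $V_j\neq 0$ and $\mu_{min}(V_j)<\mu_{min}(M_j)$. Granting it, I would let $t$ be the least such $j$; by Lemma~\ref{lmu}(1) the subbundles $V_i$ are distinct, and since $V_i\supseteq V_t\neq 0$ for $i\leq t$ they are all nonzero for such $i$, so minimality of $t$ together with Lemma~\ref{lmu}(2) (which gives $\mu_{min}(V_i)\leq\mu_{min}(M_i)$ for each nonzero $V_i$) forces $\mu_{min}(V_i)=\mu_{min}(M_i)$ for $i<t$. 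Then $V_t$ meets Definition~\ref{mur}, and $t\leq l_1-1<l_1$.

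To prove the assertion I would argue by contradiction: suppose $\mu_{min}(V_j)=\mu_{min}(M_j)$ for \emph{every} $j$ with $V_j\neq 0$, and let $t$ be the largest index with $V_t\neq 0$ (it exists because $\rank V_0=\mu-1\geq 1$). Since equality holds for all $j\leq t$, Lemma~\ref{lmu}(3) yields a short exact sequence: $0\longto V_{t+1}\longto M_{t+1}\longto\sL\longto 0$ if $t<l_1-1$, and $0\longto V_{l_1-1}\longto M_{l_1-1}\longto\sL\longto 0$ if $t=l_1-1$ (the latter being (\ref{*p}) itself when $l_1=1$, and coming from Lemma~\ref{lmu}(3) with $i=l_1-2$ when $l_1>1$).

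The contradiction then comes from a slope count, using from Notations~\ref{mu} that every $M_j$ is a direct sum of line bundles of slope at most $\mu_{max}(M_0)=(1-\tilde d_1)d$, and that $(1-\tilde d_1)d<d=\mu(\sL)$ because $\tilde d_1\geq 1$. If $t<l_1-1$, then $V_{t+1}=0$ by maximality of $t$, so the sequence reads $M_{t+1}\cong\sL$, forcing $\mu(M_{t+1})=d$; but $\mu(M_{t+1})\leq(1-\tilde d_1)d<d$, impossible. If $t=l_1-1$, let $r=\rank M_{l_1-1}$; then $\rank V_{l_1-1}=r-1$, and $V_{l_1-1}\neq 0$ forces $r\geq 2$, while $M_{l_1-1}\cong\sO_X(1-\tilde d_1)^{\oplus r}$ is semistable with $\mu_{min}(M_{l_1-1})=(1-\tilde d_1)d$. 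Comparing degrees in the short exact sequence gives
$$\mu(V_{l_1-1})=\frac{r(1-\tilde d_1)d-d}{r-1}=(1-\tilde d_1)d-\frac{\tilde d_1\,d}{r-1}<(1-\tilde d_1)d=\mu_{min}(M_{l_1-1}),$$
whence $\mu_{min}(V_{l_1-1})\leq\mu(V_{l_1-1})<\mu_{min}(M_{l_1-1})$, contradicting our standing assumption. So the assertion holds and $V_0$ has a $\mu$-reduction bundle.

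The one spot that needs care is the endpoint $t=l_1-1$: one has to notice that $V_{l_1-1}$ may be nonzero---precisely the case where $M_{l_1-1}$ has rank $\geq 2$---and that the strict drop of $\mu_{min}$ there is forced purely by the degree shift the quotient $\sL$ introduces, i.e.\ by positivity of the $d_i$. The reduction to a single bad index and the case $t<l_1-1$ are routine given Lemma~\ref{lmu}.
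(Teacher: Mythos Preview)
Your proof is correct and follows essentially the same strategy as the paper's: both argue by contradiction that some index $j$ with $V_j\neq 0$ and $\mu_{min}(V_j)<\mu_{min}(M_j)$ must exist, splitting into the cases $V_{l_1-1}=0$ and $V_{l_1-1}\neq 0$ (your ``largest nonzero index'' framing is just a repackaging of this dichotomy). The only cosmetic difference is at the endpoint: the paper uses the slope trichotomy for short exact sequences (Remark~\ref{r4}(2)) to force $\mu(M_{l_1-1})=\mu(\sL)$ and contradict $\mu_{max}(M_0)<\mu(\sL)$, whereas you compute $\mu(V_{l_1-1})$ explicitly; both reach the same contradiction.
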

\begin{proof}If the bundle $V_{l_1-1} = 0$ then $V_0$
has $\mu$-reduction bundle for some $t < l_1-1$, otherwise, 
by Lemma~\ref{lmu}~(3), we have  $M_{l_1-1} \simeq \sL$.

Hence we can assume that $V_{l_1-1}\neq 0$.

 Suppose $\mu_{min}(V_i) = \mu_{min}(M_i)$, for every $0\leq i\leq l_1-1$. 
Then, by Lemma~\ref{lmu}~(3), 
the sequence  $0\longto V_{l_1-1}\longto M_{l_1-1} \longto \sL\longto 0$
is exact.
Now, as $M_{l_1-1}$ is semistable, we have 
$$\mu_{min}(V_{l_1-1}) \leq \mu(V_{l_1-1}) \leq \mu(M_{l_1-1}) = 
\mu_{min}(M_{l_1-1}).$$
But then we have the equality
 $\mu(V_{l_1-1}) = \mu(M_{l_1-1}) = \mu(\sL)$.
Hence there is $t'<l_1$ such that $\mu_{min}(V_{t'}) <\mu_{min}(M_{t'})$. 
The smallest number $t<l_1$ such that $\mu_{min}(V_t) < \mu_{min}(M_t)$
gives the $\mu$-reduction bundle $V_t$ of $V_0$.
\end{proof}

Though the bundle $V_t$ may not  occur in the HN filtration of $V_0$, we can 
relate the HN filtration of $V_t$ and the HN filtration of $V_0$.

\begin{lemma}\label{l01} Let $V_t$ be the $\mu$-reduction bundle 
of $V_0$, where $t\geq 1$. Then the 
HN filtration of $V_0$ is
 $$\cdots\subset  W_{l+1}\subset W_l\subset V_{t-1} 
\subset V_{t-2}\cdots \subset V_1\subset  V_0.$$
Moreover
\begin{enumerate}
\item 
$W_{l}\subseteq V_t \subset V_{t-1}$ and
\item  
the HN filtration of $V_t$ is 
\begin{enumerate}
\item[(a)] either  $\cdots\subset  W_{l+1}\subset  W_l = V_t$ 
(equivalently $\mu_{min}(V_t) > \mu_{min}(V_{t-1}))$, 
\item[(b)]  or
$\cdots\subset  W_{l+1}\subset W_l \subset V_t$, 
 (equivalently $\mu_{min}(V_t) = 
\mu_{min}(V_{t-1}))$.
\end{enumerate}
In both the cases
$\mu_{min}(V_{t-1}) = \mu({V_{t-1}}/{W_l}) = 
\mu({V_{t-1}}/{V_t})$.
\end{enumerate}
\end{lemma}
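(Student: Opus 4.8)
The plan is to analyze the induced filtration $\{V_i\}$ near the index $t$ and use the defining property of the $\mu$-reduction bundle together with Lemma~\ref{lmu}. First I would recall that by Definition~\ref{mur} we have $\mu_{min}(V_i) = \mu_{min}(M_i)$ for all $i < t$, so Lemma~\ref{lmu}(3) applies with $i = t-1$: the sequence $0\to V_t \to M_t \to \sL \to 0$ is exact and $V_j/V_{j+1}\simeq M_j/M_{j+1}$ for $0\leq j\leq t-1$. Since each $M_j/M_{j+1}$ is one of the graded pieces of the HN filtration of $M_0$, it is semistable with slope $\mu_{min}(M_j) = \mu(M_j/M_{j+1})$, and these slopes are strictly decreasing in $j$. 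Hence $V_0/V_1, V_1/V_2,\dots, V_{t-2}/V_{t-1}$ are semistable with strictly decreasing slopes, and each of these common slopes exceeds $\mu(M_{t-1}/M_t) = \mu_{min}(M_{t-1}) = \mu_{min}(V_{t-1})$ (using $\mu_{min}(V_{t-1}) = \mu_{min}(M_{t-1})$ by the case $i = t-1$, which still has $\mu_{min}(V_{t-1}) = \mu_{min}(M_{t-1})$). The key inequality I need is $\mu_{min}(V_{t-1}) > \mu_{max}(V_t)$ is generally false; instead what is true is $\mu(V_{t-2}/V_{t-1}) = \mu_{min}(M_{t-2}) > \mu_{min}(M_{t-1}) = \mu_{min}(V_{t-1})$, so $V_{t-1}\subset V_{t-2}\subset\cdots\subset V_0$ realizes the top part of the HN filtration of $V_0$ \emph{provided} $\mu_{max}(V_{t-1}) \leq \mu_{min}(V_{t-2}/V_{t-1})$. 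I would verify this last point by noting that the HN filtration of $V_{t-1}$ sits inside $V_{t-1}$, and $\mu_{max}(V_{t-1}) \leq \mu(M_{t-1}) = \mu_{min}(M_{t-1})$? No — rather, since $0\to V_{t-1}\to M_{t-1}\to\sL\to 0$ is exact and $M_{t-1}$ is semistable with $\mu(M_{t-1}) < \mu(\sL)$, any quotient of $V_{t-1}$ has slope $\leq \mu(M_{t-1}/(\text{image}))\leq\mu_{min}(M_{t-1})$ when it factors through — more carefully, $\mu_{max}(V_{t-1})\leq\mu_{max}(M_{t-1}) = \mu_{min}(M_{t-1})$ because $M_{t-1}$ is semistable and a sub of a semistable bundle has $\mu_{max}$ bounded by that slope. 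Wait, $V_{t-1}\subset M_{t-1}$, and $M_{t-1}$ semistable of slope $\mu_{min}(M_{t-1})$ forces $\mu_{max}(V_{t-1})\leq\mu_{min}(M_{t-1}) < \mu_{min}(M_{t-2}) = \mu(V_{t-2}/V_{t-1})$. This gives exactly that $V_{t-1}\subset\cdots\subset V_1\subset V_0$ is the start of the HN filtration of $V_0$, and continuing it by the HN filtration $\cdots\subset W_{l+1}\subset W_l$ of $V_{t-1}$ produces the full HN filtration of $V_0$.

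Next I would pin down where $V_t$ sits. Since $t$ is the $\mu$-reduction index, $\mu_{min}(V_t) < \mu_{min}(M_t)$, while $\mu(V_{t-1}/V_t)\leq\mu(M_{t-1}/M_t) = \mu_{min}(M_{t-1})$ (by Lemma~\ref{lmu}(2) applied to the exact sequence $0\to V_{t-1}/V_t \to M_{t-1}/M_t \simeq \sL$, noting $V_{t-1}/V_t$ injects into $\sL$ hence has rank one and slope $\leq \mu(\sL)$ — actually I should argue $\mu_{min}(V_{t-1}) = \mu(V_{t-1}/V_t)$ directly). To locate $W_l$: $W_l$ is the last nonzero term of the HN filtration of $V_0$ strictly before $V_{t-1}$, i.e.\ $W_l = $ the subsheaf of $V_{t-1}$ with $V_{t-1}/W_l$ semistable of slope $\mu_{min}(V_{t-1})$. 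I claim $W_l\subseteq V_t$: indeed $\mu_{min}(W_l) = \mu_{min}(V_{t-1})$? not quite — $W_l$ is itself a term and $\mu_{min}(W_l)\geq\mu(W_l/W_{l+1}) > \mu_{min}(V_{t-1})$ if $W_{l+1}\neq W_l$. Hmm. The cleaner route: apply Remark~\ref{r4}(3) to the composite $W_l\hookrightarrow V_{t-1}\to \sL$ — wait, that composite is the restriction to $W_l$ of $M_{t-1}\to\sL$ precomposed with $V_{t-1}\hookrightarrow M_{t-1}$, but $V_{t-1} = M_{t-1}\cap V_0$ and $V_0 = \ker(M_0\to\sL)$, so $V_{t-1}\to\sL$ is the \emph{zero} map. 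That's not useful either. Instead I use: $V_{t-1}/V_t\hookrightarrow M_{t-1}/M_t\simeq\sL$ is semistable of some slope $\nu\leq\mu(\sL)$; and $\mu_{min}(V_{t-1}) = \min(\mu_{min}(V_t),\nu)$. Since $\mu_{min}(V_t) < \mu_{min}(M_t) \leq \mu_{min}(M_{t-1}) < \mu(\sL)$, comparing to $\nu$: one shows $\mu_{min}(V_{t-1}) = \mu_{min}(V_t)$ only if $\nu\geq\mu_{min}(V_t)$. In fact I expect $\mu_{min}(V_{t-1}) = \mu(V_{t-1}/V_t) = \mu(M_{t-1}/M_t) = \mu_{min}(M_{t-1})$ when $\mu_{min}(V_t)\geq\mu_{min}(M_{t-1})$, and $= \mu_{min}(V_t)$ otherwise — the two cases (a) and (b) of the lemma. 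In case (a), $\mu_{min}(V_t) > \mu_{min}(V_{t-1})$, so the HN filtration of $V_t$ is the portion of the HN filtration of $V_0$ down to and including $W_l = V_t$; in case (b), $\mu_{min}(V_t) = \mu_{min}(V_{t-1})$, and the last graded piece of $V_{t-1}$'s HN filtration has the same slope as that of $V_t$, forcing $W_l\subsetneq V_t$ with $V_t/W_l$ and $V_{t-1}/W_l$ both semistable of slope $\mu_{min}(V_{t-1})$; in either case $W_l\subseteq V_t\subset V_{t-1}$, giving (1), and the displayed filtrations give (2a)/(2b).

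Finally, the equality $\mu_{min}(V_{t-1}) = \mu(V_{t-1}/W_l) = \mu(V_{t-1}/V_t)$ in both cases: the first equality is the definition of $\mu_{min}$ via the HN filtration (the last graded piece $V_{t-1}/W_l$ is semistable of slope $\mu_{min}(V_{t-1})$, and $W_l$ is exactly the penultimate term — I should double-check $W_l$ is penultimate, not deeper, but that is how $W_l$ was named in the statement's displayed HN filtration of $V_0$). For the second equality: in case (a), $W_l = V_t$ so it is immediate; in case (b), both $V_{t-1}/W_l$ and $V_t/W_l$ are semistable of the same slope $\mu_{min}(V_{t-1})$, and the exact sequence $0\to V_t/W_l\to V_{t-1}/W_l\to V_{t-1}/V_t\to 0$ with outer terms of equal slope forces $\mu(V_{t-1}/V_t) = \mu_{min}(V_{t-1})$ by Remark~\ref{r4}(1)--(2) (an extension of equal-slope semistables is semistable of that slope). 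The main obstacle I anticipate is the careful bookkeeping in distinguishing cases (a) and (b) — precisely, proving that $W_l\subseteq V_t$ in case (b), which requires showing that the bottom HN piece of $V_{t-1}$ is ``caught'' inside $V_t$ rather than straddling the quotient $V_{t-1}/V_t$; this follows from the semistability of $V_{t-1}/V_t$ (it embeds in the line bundle $\sL$, hence is a line bundle, hence semistable) together with Remark~\ref{r4}(3): if the bottom piece of $V_{t-1}$'s HN filtration had slope equal to $\mu(V_{t-1}/V_t)$ and its image in $V_{t-1}/V_t$ were nonzero, one gets a contradiction with strictness somewhere, forcing the bottom HN piece to have smaller slope and hence to live inside $V_t$ by Remark~\ref{r4}(3). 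I would keep that argument tight but it is the delicate point.
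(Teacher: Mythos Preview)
Your proposal contains two genuine errors that break the argument.

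\textbf{Error 1: the inequality at the joint, and the claim that $M_{t-1}$ is semistable.} You want to show that $\cdots\subset W_l\subset V_{t-1}\subset\cdots\subset V_0$ is the HN filtration of $V_0$. For this, the slopes of successive quotients must strictly \emph{increase} as we go deeper into the filtration, so at the joint you need
\[
\mu_{min}(V_{t-1}) \;=\; \mu(V_{t-1}/W_l) \;>\; \mu(V_{t-2}/V_{t-1}),
\]
not $\mu_{max}(V_{t-1})\leq \mu(V_{t-2}/V_{t-1})$ as you wrote. Your inequality is both the wrong one and in the wrong direction (in fact $\mu_{max}(V_{t-1})\geq\mu_{min}(V_{t-1})=\mu_{min}(M_{t-1})>\mu_{min}(M_{t-2})=\mu(V_{t-2}/V_{t-1})$, so your claimed inequality is false). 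Moreover, your justification invokes ``$M_{t-1}$ semistable'', which is false in general: $M_{t-1}$ carries the HN filtration $0\subset M_{l_1-1}\subset\cdots\subset M_{t-1}$ with several pieces whenever $t-1<l_1-1$. The paper's route is direct: since $\mu_{min}(V_{t-1})=\mu_{min}(M_{t-1})$ (because $t-1<t$) and $\mu_{min}(M_{t-1})>\mu_{min}(M_{t-2})=\mu(V_{t-2}/V_{t-1})$, the chain of slopes is strictly increasing and uniqueness of the HN filtration finishes it.

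\textbf{Error 2: the structure of $V_{t-1}/V_t$.} You repeatedly assert that $V_{t-1}/V_t$ embeds in $\sL$ and is therefore a line bundle. This confuses two different quotients: Lemma~\ref{lmu}(3) gives $M_t/V_t\simeq\sL$, but $V_{t-1}/V_t\simeq M_{t-1}/M_t$, which is a direct sum of copies of $\sO_X(1-\tilde d_{l_1-t+1})$ --- semistable, but of rank possibly $>1$ and certainly not $\sL$. Once you use the correct isomorphism $V_{t-1}/V_t\simeq M_{t-1}/M_t$, semistability is immediate (it is an HN graded piece of $M_0$), and then Remark~\ref{r4}(3) with $\mu_{min}(W_l)>\mu(V_{t-1}/W_l)=\mu(V_{t-1}/V_t)$ gives $W_l\subseteq V_t$ in one line. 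The case split (a)/(b) and the final slope equality then follow exactly as you sketched in your last paragraph, using the exact sequence $0\to V_t/W_l\to V_{t-1}/W_l\to V_{t-1}/V_t\to 0$.
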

\begin{proof} By Lemma~\ref{lmu}~(3), we have 
$V_i/V_{i+1} \simeq M_i/M_{i+1}$, for all $0\leq i<t$.
Let 
the HN filtration of $V_{t-1}$ be 
$\cdots \subset W_{l+1}\subset W_l\subset V_{t-1}$.
Then $$\mu\left(\frac{V_{t-1}}{W_l}\right) = \mu_{min}(V_{t-1}) = 
\mu_{min}(M_{t-1})
=\mu\left(\frac{V_{t-1}}{V_t}\right) >\mu\left(\frac{V_{t-2}}{V_{t-1}}\right) >
\cdots >\mu\left(\frac{V_0}{V_1}\right),$$
Hence, by the uniqueness property of the HN filtration, 
  the HN filtration of $V_0$ has to be the filtration
$$\cdots \subset W_{l+1}\subset W_l\subset V_{t-1}\subset V_{t-2}\subset 
\cdots \subset V_1\subset V_0$$
and  $V_{t-1}/V_t$ is semistable.
Moreover, by Remark~\ref{r4}, the inquality  $\mu_{min}(W_l) > 
\mu(V_{t-1}/V_t)$ implies $W_l \subseteq V_t$.

If  $W_l = V_t$  then $$\mu_{min}(V_t) = \mu(W_l/W_{l+1}) > 
\mu(V_{t-1}/W_l) = 
\mu_{min}(V_{t-1})$$ 
and the HN filtration for $V_t$
is $\cdots\subset W_{l+1} \subset W_l = V_t$.

If $W_l\subset V_t$ then
the exact sequence 
$$0\longto V_t/W_l\longto V_{t-1}/W_l\longto V_{t-1}/V_t\longto 0$$ implies
$\mu(V_{t}/W_l) =\mu(V_{t-1}/W_l)$ and $V_t/W_l$ is semistable. 
Hence the HN filtration for $V_t$ is 
$\cdots\subset W_{l+1} \subset W_l \subset V_t$.
\end{proof}

\begin{rmk}\label{ree}If $V_t$ is the $\mu$-reduction bundle of $V_0$ such 
that $t\geq 1$ then,  by Lemma~\ref{l01}~(2), we have
 $\mu_{min}(M_{t-1})\leq  \mu_{min}(V_{t}) < \mu_{min}(M_{t})$.
\end{rmk}

\subsection{The strong $\mu$-reduction bundle}
Let $X$ be  a nonsingular curve over an algebraically closed field  $k$ of
$\Char~p>0$. Let $$0\longto V_0\longto M_0\longto \sL \longto 0$$ be the
 sequence (\ref{*p}).  Since this is an exact  sequence of locally 
free sheaves, for  any $s>0$ if $F^s:X\longto X$ is 
the $s^{th}$-iterated Frobenius map  then
the induced map  (here $q=p^s$)

\begin{equation}\label{fe2}0\longto F^{s*}V_0\longto F^{s*}M_0 = 
\oplus_{i=1}^\mu\sO_X(q-qd_i)\longto F^{s*}\sL = \sO_X(q) \longto 0
\end{equation} is exact.

\begin{rmk}\label{r00} The filtration
$$0 \subset F^{s*}M_{{l_1}-1} \subset  \cdots \subset F^{s*}M_0 = 
F^{s*}M$$
is the HN filtration of $F^{s*}M_0$.
Moreover,  $X$ being nonsingular implies that the map
$F^s$ is flat and therefore $F^{s*}M_i \cap F^{s*}V_0 = 
F^{s*}V_i$.
In particular
the induced filtration on $F^{s*}V_0$ is 
$$F^{s*}V_{l_1-1} \subset \cdots \subset 
F^{s*}V_1 \subset F^{s*}V_0.$$
\end{rmk}

\vspace{5pt}

\begin{defn}\label{murp} The bundle $V_{t_0}$ is the 
{\em  strong $\mu$-reduction} bundle of $V_0$ 
if the bundle $F^{m_1*}V_{t_0}$ is the $\mu$-reduction bundle of 
$F^{m_1*}V_0$, where
  $m_1\geq 0$ is an integer such that  
the HN filtration of $F^{m_1*}V_0$ is the strong HN filtration (this exists 
by [L]).

By Lemma~(\ref{L0}), the strong $\mu$-reduction bundle $V_{t_0}$ does exist.
 and $t_0 <l_1$ is  the integer such that $a_{min}(V_{t_0}) <  a_{min}(M_0)$ and
$a_{min}(V_i) =  a_{min}(M_i)$, for every $0\leq i < t_0$.

\end{defn}

\begin{rmk}\label{e*}All the succeeding results of this section hold true 
(with exactly the same proofs) for any short exact sequence of locally free sheaves of $\sO_X$-modules
$$0\longto V_0\longto M_0\longto \sL\longto 0,$$
where $\sL$ is a line bundle, satisfying the following properties (P1) and (P2),
\begin{enumerate} 
\item[(P1)]  The induced map $M_{l_1-1}\longto \sL$ is nonzero 
and $\mu_{max}(M_0) < \mu(\sL)$, where $M_{l_1-1}$ is the 
first  nonzero bundle occuring in the HN filtration of $M_0$.
\item[(P2)] If $\Char~k = p>0$ then the HN filtration of $M_0$ is the strong 
HN filtration.
\end{enumerate}
\end{rmk}

The following lemma implies that the 
strong $\mu$-reduction bundle always contains the $\mu$-reduction bundle. 

\begin{lemma}\label{L0p} For $s\geq 1$, if $F^{s*}{V_{t_1}}$ is the 
$\mu$-reduction bundle of $F^{s*}V_0$ and $V_t$ is 
the $\mu$-reduction bundle of $V_0$ then
$t_1\leq t$.

In particular if $V_{t_0}$ is the strong $\mu$-reduction bundle of $V_0$ then
$t_0\leq t$.
\end{lemma}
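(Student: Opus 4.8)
The plan is to compare the two induced filtrations on $F^{s*}V_0$ and on $V_0$ directly, using the characterization of the $\mu$-reduction index via minimum slopes. Write $q = p^s$. By Remark~\ref{r00}, the induced filtration on $F^{s*}V_0$ is precisely $F^{s*}V_{l_1-1}\subset\cdots\subset F^{s*}V_1\subset F^{s*}V_0$, and the HN filtration of $F^{s*}M_0$ is $0\subset F^{s*}M_{l_1-1}\subset\cdots\subset F^{s*}M_0$. So the subbundles entering both $\mu$-reduction definitions are Frobenius pullbacks of the $V_i$ and $M_i$; what changes is only the (in)equality pattern of the minimum slopes, since Frobenius pullback can break semistability but never ``creates'' it.

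First I would record the key slope inequality: for each $i<l_1$, $F^{s*}(M_i/M_{i+1})$ is a Frobenius pullback of the semistable bundle $M_i/M_{i+1} = \sO_X$-sum of line bundles, hence is still semistable (pullbacks of line-bundle sums stay semistable), so $\mu_{min}(F^{s*}M_i) = q\,\mu_{min}(M_i)$. On the other hand, by Remark~\ref{r4}(4), $\mu_{min}(F^{s*}V_i)\le q\,\mu_{min}(V_i)$. Therefore, if $i<t$ (so that $\mu_{min}(V_i)=\mu_{min}(M_i)$ by definition of the $\mu$-reduction bundle $V_t$), combining with Lemma~\ref{lmu}(2) applied after Frobenius pullback — namely $\mu_{min}(F^{s*}V_i)\le \mu_{min}(F^{s*}M_i)$ — forces $\mu_{min}(F^{s*}V_i) = q\,\mu_{min}(M_i) = \mu_{min}(F^{s*}M_i)$ for all $i<t$. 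This says exactly that the $\mu$-reduction index $t_1$ of $F^{s*}V_0$ is not $<t$; more precisely, since $t_1$ is by definition the \emph{least} index where $\mu_{min}(F^{s*}V_{t_1})<\mu_{min}(F^{s*}M_{t_1})$, the equalities just established for all $i<t$ give $t_1\ge t$... wait: that is the reverse of what we want. Let me re-examine: the conclusion $t_1\le t$ should instead follow from checking that \emph{at} index $t$ the strict inequality already holds after pullback. Indeed $\mu_{min}(V_t)<\mu_{min}(M_t)$ gives $\mu_{min}(F^{s*}V_t)\le q\,\mu_{min}(V_t) < q\,\mu_{min}(M_t) = \mu_{min}(F^{s*}M_t)$, so the strict drop occurs at index $\le t$ for $F^{s*}V_0$, whence $t_1\le t$. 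The equalities for $i<t$ are not even needed for this direction; they would be needed only to pin down $t_1$ exactly.

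So the core of the argument is the single displayed chain
$$\mu_{min}(F^{s*}V_t) \;\le\; q\,\mu_{min}(V_t) \;<\; q\,\mu_{min}(M_t) \;=\; \mu_{min}(F^{s*}M_t),$$
whose first inequality is Remark~\ref{r4}(4), whose middle inequality is the defining property of the $\mu$-reduction bundle $V_t$, and whose final equality is the preservation of $\mu_{min}$ of a direct sum of line bundles under Frobenius pullback. Since $t_1$ is defined as the smallest index at which $\mu_{min}(F^{s*}V_{\bullet})<\mu_{min}(F^{s*}M_{\bullet})$ and we have exhibited index $t$ as \emph{one} such index, minimality gives $t_1\le t$. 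The ``in particular'' clause is immediate: take $s = m_1$ so that the HN filtration of $F^{m_1*}V_0$ is strong; then $F^{m_1*}V_{t_0}$ is the $\mu$-reduction bundle of $F^{m_1*}V_0$ by Definition~\ref{murp}, so $t_0$ plays the role of $t_1$ above and $t_0\le t$.

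\textbf{Main obstacle.} The only nonformal point is the claim $\mu_{min}(F^{s*}M_i)=q\,\mu_{min}(M_i)$, i.e.\ that Frobenius pullback does not disturb the HN filtration of $M_0$. This is true because each graded piece $M_i/M_{i+1}$ is a direct sum of copies of a single line bundle $\sO_X(1-\tilde d_j)$ (see Notations~\ref{mu}), and the pullback of a line bundle is a line bundle, so each $F^{s*}(M_i/M_{i+1})$ is again $\sO_X$-isomorphic to a sum of copies of one line bundle and in particular semistable; the chain of strictly decreasing slopes $q(1-\tilde d_1)d > \cdots > q(1-\tilde d_{l_1})d$ then certifies $0\subset F^{s*}M_{l_1-1}\subset\cdots$ as the HN filtration of $F^{s*}M_0$. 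I would state this as a one-line observation (it is already essentially recorded in Remark~\ref{r00}), and then the lemma follows from the displayed three-term chain with no further computation.
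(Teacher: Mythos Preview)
Your final argument is correct and is essentially identical to the paper's proof: the single chain
\[
\mu_{min}(F^{s*}V_t)\le p^s\mu_{min}(V_t)<p^s\mu_{min}(M_t)=\mu_{min}(F^{s*}M_t)
\]
(using Remark~\ref{r4}(4), the defining property of $V_t$, and the strong HN filtration of $M_0$ from Remark~\ref{r00}) immediately gives $t_1\le t$, and the ``in particular'' follows by specializing $s$.

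One caution about the paragraph you abandoned. The step ``forces $\mu_{min}(F^{s*}V_i)=q\,\mu_{min}(M_i)$ for all $i<t$'' is not valid: both inequalities you invoke, $\mu_{min}(F^{s*}V_i)\le q\,\mu_{min}(V_i)$ and $\mu_{min}(F^{s*}V_i)\le \mu_{min}(F^{s*}M_i)$, point the same way, so they cannot squeeze out an equality. In fact these equalities can genuinely fail (Frobenius may break the semistability of $V_i/V_{i+1}$ even when $V_i/V_{i+1}\simeq M_i/M_{i+1}$ is false\,---\,here it \emph{is} true by Lemma~\ref{lmu}(3), so actually the equalities do hold for $i<t$, but not by the argument you gave; rather because $F^{s*}(V_i/V_{i+1})\simeq F^{s*}(M_i/M_{i+1})$ is strongly semistable). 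Your remark that these equalities ``would be needed only to pin down $t_1$ exactly'' is therefore misleading: they do hold, and they show $t_1\ge t$, hence in fact $t_1=t$ whenever the isomorphisms $V_i/V_{i+1}\simeq M_i/M_{i+1}$ for $i<t$ are available---but that stronger conclusion is not what the lemma asserts, and the paper does not claim it at this point.
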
 
\begin{proof}We know  $t_1<l_1$.
 By Remark~\ref{r00}, 
$ F^{s*}M_i/F^{s*}M_{i+1} \simeq F^{s*}(M_i/M_{i+1})$ and 
$F^{s*}M_i\cap F^{s*}V_0 = F^{s*}V_{i}$.
By definition,  $\mu_{min}(V_t) <\mu_{min}(M_t)$ therefore (see Remark~\ref{r4})
$$\mu_{min}(F^{s*}V_t) \leq p^s\mu_{min}(V_t) < p^s\mu_{min}(M_t) =
\mu_{min}(F^{s*}M_t),$$
which implies  $t_1\leq t$.
\end{proof}

\begin{rmk}\label{rmu}
Though $V_t$ may not occur in the HN filtration of $V_0$,  
the number $\mu_{min}(V_t)$ is equal to one of the HN slopes of $V_0$, by 
Lemma~\ref{l01}.
Similarly, if $V_{t_0}$ is 
the  strong $\mu$-reduction bundle of $V_0$  then the number 
$a_{min}(V_{t_0})$  is equal to one of the strong HN slopes of $V_0$.
\end{rmk}

\begin{lemma}\label{hk2}Let $X$ be a nonsingular projective 
curve over a field of char~$p>0$ with an ample
line bundle $\sO_X(1)$ of degree $d$. Let (where $d_i\geq 1$) 
$$0\longto V_0\longto M_0 = \oplus_i\sO_X(1-d_i)\longto \sO_X(1)\longto 0,$$
 be a short exact sequence of locally free sheaves of $\sO_X$-modules. 
If  $V_{t_0}$ is  
 the strong $\mu$-reduction bundle  of $V_0$ then 

\begin{enumerate}
\item $ f_{V_0, \sO_{X}(1)}- f_{M_0, \sO_{X}(1)} = 
f_{V_{t_0}, \sO_{X}(1)}- f_{M_{t_0}, \sO_{X}(1)}$, and
\item $\mbox{max}~\{x\mid f_{V_0, \sO_{X}(1)}(x)- f_{M_0, 
\sO_{X}(1)}(x)\neq 0\} =
1-\frac{a_{min}(V_{t_0})}{d}$.
\end{enumerate}
\end{lemma}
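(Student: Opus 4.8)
\textbf{Proof proposal for Lemma~\ref{hk2}.}

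The plan is to prove the statement by induction on $t_0$, using the telescoping behaviour of the HN filtration established in Lemma~\ref{lmu}~(3). First I would record the base case $t_0 = 0$: here there is nothing to prove for (1), and (2) is exactly the formula~(\ref{*3}) of Remark~\ref{*1}~(3) applied to $V_0$, provided one knows $a_{min}(V_0) < a_{min}(M_0)$, which holds by the very definition of the strong $\mu$-reduction bundle (Definition~\ref{murp}). For the inductive step, suppose $t_0 \geq 1$. The key identity I want is, for each $0 \leq j < t_0$,
$$f_{V_j, \sO_X(1)} - f_{M_j, \sO_X(1)} = f_{V_{j+1}, \sO_X(1)} - f_{M_{j+1}, \sO_X(1)},$$
after which part (1) follows by iterating from $j=0$ to $j = t_0 - 1$. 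To obtain this identity I would pass to the Frobenius pullbacks where the HN filtrations become strong: by Definition~\ref{murp}, $F^{m_1*}V_{t_0}$ is the $\mu$-reduction bundle of $F^{m_1*}V_0$, and by Remark~\ref{r00} the induced filtrations on $F^{m_1*}M_0$ and $F^{m_1*}V_0$ are compatible, so that $a_{min}(V_j) = a_{min}(M_j)$ for $j < t_0$. Then Lemma~\ref{lmu}~(3) (applied after Frobenius pullback, which is legitimate by Remark~\ref{r00}) gives short exact sequences $0 \to F^{m_1*}V_{j+1} \to F^{m_1*}M_{j+1} \to F^{m_1*}\sO_X(1) \to 0$ together with isomorphisms $F^{m_1*}(V_j/V_{j+1}) \simeq F^{m_1*}(M_j/M_{j+1})$ for $j < t_0$.

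Now I would use additivity of the HK density function under short exact sequences of bundles: for $0 \to E' \to E \to E'' \to 0$ one has $f_{E,\sO_X(1)} = f_{E',\sO_X(1)} + f_{E'',\sO_X(1)}$, because $h^1$ of the twisted pullbacks is additive up to an $h^2$-term that vanishes on a curve and an $h^0$-term that is $O(q)$ and hence dies in the limit defining~(\ref{fv}). Applying this to $0 \to V_{j+1} \to V_j \to V_j/V_{j+1} \to 0$ and to $0 \to M_{j+1} \to M_j \to M_j/M_{j+1} \to 0$, and subtracting, the contributions of $V_j/V_{j+1}$ and $M_j/M_{j+1}$ cancel by the isomorphism just noted (an isomorphism of bundles induces equality of HK density functions), yielding exactly the telescoping identity above. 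Iterating gives (1). For (2), once (1) is known it suffices to show that $1 - a_{min}(V_{t_0})/d$ is the maximum support point of $f_{V_{t_0},\sO_X(1)} - f_{M_{t_0},\sO_X(1)}$; by Remark~\ref{*1}~(3) the support of $f_{V_{t_0},\sO_X(1)}$ is contained in $(-\infty, 1 - a_{min}(V_{t_0})/d]$ with $f_{V_{t_0},\sO_X(1)}$ strictly positive just to the left of that endpoint, and the support of $f_{M_{t_0},\sO_X(1)}$ is contained in $(-\infty, 1 - a_{min}(M_{t_0})/d]$; since $a_{min}(V_{t_0}) < a_{min}(M_{t_0})$ gives $1 - a_{min}(M_{t_0})/d < 1 - a_{min}(V_{t_0})/d$, the difference $f_{V_{t_0},\sO_X(1)} - f_{M_{t_0},\sO_X(1)}$ agrees with $f_{V_{t_0},\sO_X(1)}$ in a left-neighbourhood of $1 - a_{min}(V_{t_0})/d$ and hence is strictly positive there, while it vanishes beyond that point.

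The main obstacle I anticipate is justifying carefully that one may run the whole argument after a Frobenius pullback $F^{m_1*}$ and then descend the conclusion back to $V_0$ itself — that is, verifying that the HK density functions $f_{V_j,\sO_X(1)}$ are genuinely computed from the strong HN data at level $m_1$ and that the telescoping identities at the $F^{m_1*}$ level are equivalent to the desired ones at level $0$. This is where one must invoke the compatibility statements of Remark~\ref{r00} (flatness of Frobenius on a nonsingular curve, so $F^{m_1*}M_i \cap F^{m_1*}V_0 = F^{m_1*}V_i$) and the fact that $f_{F^{m_1*}W, \sO_X(1)}$ and $f_{W,\sO_X(1)}$ determine each other via a linear reparametrisation, so that an equality of differences of density functions at the $F^{m_1*}$ level transfers verbatim to level $0$. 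The rest — additivity of $f$, the Serre-duality estimates of Remark~\ref{r5}, and the explicit support computation — is routine given Remark~\ref{*1}.
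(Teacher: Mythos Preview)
Your telescoping strategy is sound, but the justification of the key step is wrong. The claim that for every short exact sequence $0\to E'\to E\to E''\to 0$ of bundles on a curve one has $f_{E,\sO_X(1)}=f_{E',\sO_X(1)}+f_{E'',\sO_X(1)}$, ``because the $h^0$-defect is $O(q)$ and dies in the limit'', is false: the defect is typically of exact order $q$, so dividing by $q$ leaves a nonzero contribution. For instance, on $\P^1$ with $d=1$ the sequence $0\to\sO(-1)\to\sO^{\oplus 2}\to\sO(1)\to 0$ gives $f_{\sO^{\oplus 2}}(x)=2(1-x)$ for $0\leq x<1$, while $f_{\sO(-1)}(x)+f_{\sO(1)}(x)=2-x$ there. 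In general one only has $f_E\leq f_{E'}+f_{E''}$. So the inductive step, as written, does not go through.

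What rescues the argument in this specific situation is precisely the structural content of Lemma~\ref{l01}. After pulling back by $F^{m_1}$ (where $F^{m_1*}V_0$ has strong HN filtration), Lemma~\ref{l01} applied to $F^{m_1*}V_0$ shows that for $0\leq j\leq t_0-2$ the subbundle $F^{m_1*}V_{j+1}$ is literally one of the terms of the strong HN filtration of $F^{m_1*}V_j$, and for $j=t_0-1$ one has ${\tilde W}_l\subseteq F^{m_1*}V_{t_0}\subset F^{m_1*}V_{t_0-1}$ with $\mu(F^{m_1*}V_{t_0}/{\tilde W}_l)=\mu(F^{m_1*}V_{t_0-1}/{\tilde W}_l)$. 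In either case the strong HN data of $V_j$ is the union (adding ranks at a repeated slope in case~(b)) of the strong HN data of $V_{j+1}$ and of $V_j/V_{j+1}$. Now the explicit formula of Remark~\ref{*1}~(2), which expresses $f_{V,\sO_X(1)}$ as a sum over the strong HN pieces, gives $f_{V_j}=f_{V_{j+1}}+f_{V_j/V_{j+1}}$ for these particular sequences; the analogous statement for $M_j$ is immediate since $M_{j+1}$ is an HN step of $M_j$. With this corrected justification your telescoping works and parts~(1)--(2) follow as you outlined.

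Note that once you invoke Lemma~\ref{l01} and Remark~\ref{*1}~(2) in this way you are essentially carrying out the paper's own argument: the paper writes down the full strong HN data of $V_0$ and $M_0$ in terms of those of $V_{t_0}$ and $M_{t_0}$ (this is the Claim in the proof, relying on Lemma~\ref{l01}) and then compares the two differences $A_n(m)$ and $B_n(m)$ directly via Remark~\ref{*1}~(1), before passing to the limit. Your version telescopes one step at a time and works at the level of the limiting function $f$; the paper does it in one shot at the level of $f_{n+n_1}$. The substance is the same, and in both cases Lemma~\ref{l01} --- not a general additivity principle --- is what makes the cancellation possible.
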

\begin{proof}If $t_0 = 0$ then the assertion~(2) follows from 
(\ref{*3}) and the assertion~(1). Hence 
we can assume $t_0\geq 1$. 
Let $n_1 > 0$ such that the HN filtration of 
$F^{n_1*}V_0$ is the  strong HN filtration.
If $V_{t_0}$ is the strong $\mu$-reduction bundle of $V_0$ then, by definition,  
$F^{n_1*}V_{t_0}$ is the $\mu$-reduction bundle of $F^{n_1*}V_0$. Hence

 \begin{enumerate}
\item $\mu_{min}(F^{n_1*}V_{t_0}) 
< \mu_{min}(F^{n_1*}M_{t_0})$ and,  by  Lemma~\ref{l01},
  \item  the HN filtration  of $F^{n_1*} V_{0}$ is
$$0\subset \cdots \subset {\tilde W}_{l+1}\subset {\tilde W}_l \subset 
F^{n_1*} V_{{t_0-1}} \subset F^{n_1*} V_{t_0-2}\subset
\cdots  \subset F^{n_1*}V_0$$
and  
\item 
\begin{enumerate}
\item[(a)] either the HN filtration of ${F^{n_1*} V}_{t_0}$ is
$\cdots \subset {\tilde W}_{l+1}\subset {\tilde W}_l = {F^{n_1*} V}_{t_0}$ 
\item[(b)] or the HN filtration of ${F^{n_1*} V}_{t_0}$ is  $\cdots 
\subset {\tilde W}_{l+1}\subset {\tilde W}_l \subset 
 {F^{n_1*} V}_{t_0}$.
\end{enumerate}
\item Moreover, in both the cases,
 $$\frac{F^{n_1*}V_0}{F^{n_1*}V_1}\simeq 
\frac{F^{n_1*} M_0}{{F^{n_1*} M}_1}, \ldots, 
\frac{{F^{n_1*} V}_{t_0-1}}{{F^{n_1*} V}_{t_0}}\simeq 
\frac{{F^{n_1*} M}_{t_0-1}}{{F^{n_1*} M}_{t_0}}$$
and $\mu({{F^{n_1*} V}_{t_0-1}}/{{F^{n_1*} V}_{t_0}}) = 
\mu({{F^{n_1*} V}_{t_0-1}}/{{\tilde W}_l})$.
\end{enumerate}

It is easy to check that 
 the HN filtration of ${F^{n_1*} V}_{t_0}$ is the strong HN filtration.

Moreover, if
$(\{a_1q_1, \ldots, a_{k+1}q_1\}, \{r_1, \ldots, r_{k+1}\})$
is the strong HN data of $F^{n_1*}V_{t_0}$ then
$(\{a_1, \ldots, a_{k+1}\}, \{r_1, \ldots, r_{k+1}\})$
is the strong HN data of $V_{t_0}$.
Let the  HN data (which is same as the strong HN data) for $M_{t_0}$ be 
$(\{b_1, \ldots, b_{l_1-t_0}\}, \{s_1, \ldots, r_{l_1-t_0}\})$. 

 Let
$$A_n(m) = 
h^1(X, F^{n+n_1*}{V}_{0}(m)) - h^1(X, 
F^{n+n_1*}{M}_0(m)),$$
$$B_n(m) = 
h^1(X, F^{n+n_1*}{V}_{t_0}(m)) -h^1(X, 
F^{n+n_1*}{M}_{t_0}(m)).$$

\noindent{\bf Claim}. 
\begin{enumerate}\item For $q=p^n$ and there is a constant $C$ such that 
 $|C|\leq (\rank~M_0)d(d-3)$ and 
$$\begin{array}{ll}
A_{n}(m) &  =
 B_{n}(m) + C,\quad\mbox{for}\quad \frac{m}{qq_1} \in~~\left[0, 
\frac{(d-3)}{qq_1} -\frac{a_{k+1}}{d}\right),\\\\
A_{n}(m) & = B_{n}(m) = 0,\quad
\mbox{for}\quad \frac{m}{qq_1} \in~~~\left[\frac{(d-3)}{qq_1} -
\frac{a_{k+1}}{d}, \infty\right).\end{array}$$
\item $A_n(m) = B_n(m) = h^1(X, F^{n+n_1*}{V_{t_0}}(m)) = 
-r_{k+1}[a_{k+1}qq_1 + md +d(d-3)],$

$$\mbox{for}~~~~\frac{m}{qq_1} \in \left(\frac{(d-3)}{qq_1}- 
\frac{ \mbox{min}\{a_k, 
b_{l_1-t_0}\}}{d},~~~
-\frac{a_{k+1}}{d}\right).$$
\end{enumerate}

\vspace{5pt}
\noindent{\underline{Proof of the claim}:}\quad 
We prove the claim when ${\tilde W}_l\subset F^{n_1*}V_{t_0}$. The case 
${\tilde W}_l =  {F^{n_1*}V}_{t_0}$
 can be argued similarly.
Since $$a_{k+1}q_1 = \mu(F^{n_1*}V_{t_0}/{\tilde W}_l) = 
\mu(F^{n_1*}V_{t_0-1}/{\tilde W}_l)
= \mu(F^{n_1*}(M_{t_0-1}/M_{t_0}))$$
the  strong HN data of $V_0$ is 
$(\{a_1, \ldots, a_{k+1}, a_{k+2}, \ldots, a_{k+t_0}\}, \{r_1, \ldots, r_k, 
{\overline{r_{k+1}}}, \ldots, {\overline{r_{k+t_0}}}\})$.
Hence the strong HN data of $M_0$ is given by
$$ (\{b_1, \ldots, b_{l_1-t_0}, a_{k+1}, a_{k+2}, \ldots, a_{k+t_0}\}, 
\{s_1, \ldots, s_{l_1-t_0}, 
{\overline{r_{k+1}}}-r_{k+1}, {\overline{r_{k+2}}}\ldots, {\overline{r_{k+t_0}}}\})$$
as 
$$s_{l_1-(t_0-1)} = \rank (M_{t_0-1}/M_{t_0}) = \rank (V_{t_0-1}/V_{t_0})
= {\overline{r_{k+1}}}-r_{k+1}.$$
 Now the claim follows from the formula given in Remark~\ref{*1}~(1).

Therefore we have
$$\lim_{q\to \infty}\frac{1}{qq_1}A_n(\lfloor xqq_1\rfloor) = 
\lim_{q\to \infty}\frac{1}{qq_1}B_n(\lfloor xqq_1\rfloor).$$
This proves  assertion~(1) of the lemma.
The part~(1) of the claim also implies that 
$$f_{V_0, \sO_{X}(1)}(x)- f_{M_0, \sO_{X}(1)}(x)
   =  0,\quad\mbox{for}\quad x\in\left[ 1- \frac{a_{k+1}}{d}, 
\quad \infty\right).$$ 
Note that $a_{k+1} < a_k$ and 
$a_{k+1} = a_{min}(V_{t_0})  < 
b_{l_1-t_0} = a_{min}(M_{t_0})$.

Hence if
$x\in \left(1-\min\{{a_k}/{d}, {a_{min}(M_{t_0})}/{d}\},\quad 1- 
{a_{min}(V_{t_0}}/{d}\right)$ then 
$$f_{V_0, \sO_{X}(1)}(x)- f_{M_0, \sO_{X}(1)}(x)
 =  -r_{k+1}\left[{a_{k+1}} +d(x-1)\right] >0.$$  
This proves the second assertion and hence the lemma. 
\end{proof}

\section{The maximum support 
 $\alpha(R, I)$ and the  $F$-threshold $c^I({\bf m})$}

Throughout this section fix the following

\begin{notations}\label{n4}Let $(R, I)$ be a standard graded pair, where $R$ is a 
two dimensional domain over an algebraically closed field $k$. 
Let $d = e_0(R, {\bf m})$ be the multiplicity of $R$ with respect to ${\bf m}$.
 In the rest of this section we fix a set of homogeneous generators
$h_1,\ldots, h_\mu$ 
 of degress
$d_1, \ldots, d_\mu$ respectively, of $I$.
Let $S$ be the integral closure of $R$ in its quotient field.  
Then $X= \mbox{Proj}~S$ is a  
nonsingular  curve with the ample line bundle $\sO_X(1)$ of
degree $d$ and the short exact sequence 
\begin{equation}\label{ch0}0\longto V_0\longto M_0 =\oplus_{i=1}^\mu
\sO_{X}(1-d_i)\longto \sO_{X}(1)
\longto 0,\end{equation}
where the map $\sO_X(1-d_i)\longto \sO_X(1)$ is the multiplication map
given by the element $h_i$.
 
Let the HN filtration of $M$ be
$$0 = M_{l_1} \subset M_{l_1-1}\subset\cdots M_1\subset 
M_0 = M,~~\mbox{and let}~~V_i = V\cap M_i.$$
\end{notations}

By Proposition~\ref{L0},   the bundle $V_0$
has the $\mu$-reduction bundle $V_t$ for some $t<l_1$
  and the sequence of canonical maps
 \begin{equation}\label{ch01}0\longto V_{t}\longto M_t \longto \sO_{X}(1)
\longto 0.\end{equation} is a 
 short exact sequence of sheaves of $\sO_X$-modules

 In case  $\Char~k = p  >0$, the bundle $V_0$ has 
  the strong $\mu$-reduction bundle $V_{t_0}$, for some $t_0\leq t$
with the short exact sequence of $\sO_X$-sheaves
 \begin{equation}\label{chp}0\longto V_{t_0}\longto M_{t_0} \longto 
\sO_{X}(1) \longto 0.\end{equation}

\begin{rmk}\label{r11}Note 
that for a  given choice of   generators of $I$, the sequence~(\ref{ch0})
 and hence the bundles $V_t$ and $V_{t_0}$ are 
unique, but need not be unique for the pair $(R,I)$.
\end{rmk}

\subsection{The maximum support $\alpha(R, I)$ of the HK density function 
$f_{R,I}$}

\begin{thm}\label{n1}Following 
the Notations~\ref{n4}, if $(R, I)$ is a standard graded pair 
 over a perfect field of 
 characteristic $p>0$ and $V_{t_0}$ is a strong $\mu$-reduction bundle for 
$V_0$ then 
 \begin{enumerate}
\item $f_{R,I}(x) = f_{V_{t_0}, \sO_{X}(1)}(x)- f_{M_{t_0}, \sO_{X}(1)}(x)$, for $x\geq 1$. 
\item Moreover 
$$\alpha(R, I) := \mbox{Sup}~\{x\mid f_{R,I}(x)>0\} = 1- 
{a_{min}(V_{t_0})}/{d}.$$
\end{enumerate}
\end{thm}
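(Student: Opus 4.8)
The plan is to reduce the statement for $(R,I)$ to the purely geometric statement already established in Lemma~\ref{hk2} about the short exact sequence (\ref{ch0}) of bundles on $X = \mbox{Proj}~S$. Recall from subsection~2.2 that, since $S$ is the integral closure of $R$ in $Q(R)$ and the inclusion $R \hookrightarrow S$ is a finite graded degree-$0$ map, additivity of the HK density function gives $f_{R,I} = f_{S,I}$. Using the canonical locally split sequence (\ref{ch0}) and the computation of $\ell(S/I^{[q]}S)_{m+q}$ via $h^1$ of Frobenius pullbacks of $V_0$ and $M_0$ (equation (\ref{fr})), one has $f_{R,I}(x) = f_{V_0,\sO_X(1)}(x) - f_{M_0,\sO_X(1)}(x)$ for $x \geq 1$. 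This is exactly the left-hand side of Lemma~\ref{hk2}, so part~(1) of the theorem is immediate: by Lemma~\ref{hk2}~(1), $f_{V_0,\sO_X(1)} - f_{M_0,\sO_X(1)} = f_{V_{t_0},\sO_X(1)} - f_{M_{t_0},\sO_X(1)}$, hence $f_{R,I}(x) = f_{V_{t_0},\sO_X(1)}(x) - f_{M_{t_0},\sO_X(1)}(x)$ for $x \geq 1$.

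For part~(2), first I would note that the supremum defining $\alpha(R,I)$ only concerns the behaviour of $f_{R,I}$ for $x \geq 1$: indeed for $x < 1$ one checks (as in subsection~2.2, since $I^{[q]}S$ is generated in degrees $\geq q$ once the $d_i \geq 1$) that the relevant graded pieces agree with those of $S$ itself and the difference contributes nothing new to the support beyond $x=1$, and in any case $\alpha(R,I) \geq 1$ because $f_{R,I}$ is nonzero near $x=1$ when $I$ is a proper ideal. So $\alpha(R,I) = \mbox{Sup}\{x \mid f_{R,I}(x) > 0\} = \mbox{max}\{x \mid f_{V_0,\sO_X(1)}(x) - f_{M_0,\sO_X(1)}(x) \neq 0\}$, and by Lemma~\ref{hk2}~(2) this equals $1 - a_{min}(V_{t_0})/d$. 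One subtlety to record is that $d$ here is $e_0(R,{\bf m})$, which equals $\deg_{\sO_X(1)}(X)$, i.e. the degree of the ample line bundle $\sO_X(1) = \pi^*\sO_Y(1)$ on $X$; this identification is what makes the $d$ appearing in Lemma~\ref{hk2} (the degree of $\sO_X(1)$) match the $d$ in the statement. I should also invoke the reduction (the remark in subsection~2.1) that we may assume $k$ algebraically closed without changing $f_{R,I}$ or the semistability behaviour of $V_0$ on $X$, since Lemma~\ref{hk2} is stated over an algebraically closed field.

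The one genuine thing to verify carefully, rather than a routine citation, is that the bundle $V_{t_0}$ and sequence (\ref{chp}) arising from $(R,I)$ are legitimate inputs to Lemma~\ref{hk2}: one needs that (\ref{ch0}) is a short exact sequence of locally free sheaves with $M_0 = \oplus_i \sO_X(1-d_i)$ and $d_i \geq 1$, and that the strong $\mu$-reduction bundle $V_{t_0}$ exists — but existence is guaranteed by Proposition~\ref{L0} together with Definition~\ref{murp}, and the hypotheses $d_i \geq 1$ (equivalently $I \subseteq {\bf m}$, so generators have positive degree) and local freeness are automatic for a standard graded pair with $I$ of finite colength in a domain. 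The main (mild) obstacle, then, is purely bookkeeping: making sure the degree conventions ($d = e_0(R,{\bf m}) = \deg \sO_X(1)$) and the range $x \geq 1$ are handled cleanly so that the geometric Lemma~\ref{hk2} applies verbatim and no spurious support outside $[1, 1-a_{min}(V_{t_0})/d]$ is introduced. Everything else is a direct appeal to the results of Sections~2 and~3.
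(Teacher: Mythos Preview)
Your proposal is correct and follows exactly the paper's approach: the paper's own proof is the single line ``Both the assertions follow from Lemma~\ref{hk2} and (\ref{fr}),'' and your write-up simply unpacks this citation together with the bookkeeping (algebraic closure, $d=e_0(R,{\bf m})=\deg\sO_X(1)$, and the fact that the support extends past $x=1$ so that (\ref{fr}) suffices). The only point you leave slightly informal---that $\alpha(R,I)\geq 1$---is in fact automatic from Lemma~\ref{hk2}~(2) since $a_{min}(V_{t_0})<a_{min}(M_{t_0})\leq 0$ forces $1-a_{min}(V_{t_0})/d>1$.
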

\begin{proof}Both the assertions  follow from Lemma~\ref{hk2} and (\ref{fr}).
\end{proof}

\begin{rmk}\label{rrr}In the following two cases  
the bundle  $V_0$ itself is  the 
strong $\mu$-reduction bundle of $V_0$.
 \begin{enumerate}
\item  If $I$ has 
a set of generators of the same  degrees. Then 
$\mu_{min}(V_{0}) < \mu_{min}(M_0)$ and therefore  
$a_{min}(V_{0}) < a_{min}(M_0)$.

\item Suppose $h_1, \ldots, h_{\mu}$ is a set of minimal homogeneous generators 
of $I$. By Theorem~\ref{n1},  if $V_{t_0}\neq V_0$ is the  strong $\mu$-reduction 
 bundle then there is a graded ideal $J\subset I$ such that $I^* = J^*$, where
 $J$ is generated by a proper subset of the set $\{h_1, \ldots, h_\mu\}$.
Therefore if $I$ itself is the minimal graded {\em tight closure reduction} for 
$I$, {\em i.e.}, 
$$\{I\} = \mbox{min}\{J\subseteq I\mid J~\mbox{graded},~~J^* = I^*\}$$ then 
by choosing a  minimal generating set $\{h_1, \ldots, h_{\mu}\}$ in the 
short exact sequence  (\ref{ch0}), we can ensure that 
 $V_0$ itself is a strong $\mu$-reduction bundle. 
In particular, if $R$ is a $F$-regular ring then  $V_0 = V_{t_0}$.  
\end{enumerate}

In the following example we show that 
 $V_0$  
is not always  a strong $\mu$-reduction bundle of itself, which is equivalent to
 showing
$a_{min}(V_0) = a_{min}(M_0)$. Moreover, in the example,
 the functions $f_{M_0, \sO_X(1)}$
and $f_{V_0, \sO_X(1)}$ are  the same functions in the neighbourhood of 
their maximum common support.
In particular  
$\alpha(R, I) < 1-a_{min}(V_0)/d$.

\vspace{5pt}

\noindent{\bf Example}.
Let $R=k[x,y,z]/(x^d+y^d+z^d)$  and  
$I = (x^2, y^2, z^5)$.  Then,
by Lemma~3.2 of [S], 
$I$ is in the tight closure of $(x^2, y^2)$.
Hence $\alpha(R, I) = \alpha(R, (x^2, y^2)) = 4$, where the last equality follows 
by Theorem~4.10 of [TrW].

Now, for the pair 
$(R, I)$, the sequence (\ref{ch0}) is given by
$$0\longto V_0\longto M_0 = \sO_X(-1)\oplus \sO_X(-1)\oplus \sO_X(-4) \longto 
\sO_X(1)\longto 0$$
and the strong HN data of $M_0$ is $(\{-d, -4d\}, \{2, 1\})$ and $\mu(V) = -7d$. 

If  $a_{min}(V_0) \neq  a_{min}(M_0)$ then 
 $V_{0}$ is the strong $\mu$-reduction of $V_0$. Hence  
$a_{min}(V_{0}) = -3d$ which would imply the strong HN filtration of $V_0$ is
$0\subset \sL_1\subset F^{s*}V_0$, for some $s\geq 0$, where $\sL_1$ is a 
line bundle. But then 
 $$a_{max}(V_0) = \deg~\sL_1/p^s = \deg~V_0 - a_{min}(V_0) = 
-4d < a_{min}(V_0).$$
Hence $a_{min}(V_0) = a_{min}(M_0) = -4d$.
In particular the strong HN data of $V_0$ is 
$(\{-3d, -4d\}, \{1, 1\}$.
Now the HK density functions $f_{M_0, \sO_X(1)}$ and 
$f_{V_0, \sO_X(1)}$ can be written as follows:
$$f_{M_0, \sO_X(1)}(x)   = \begin{cases} 3d(3-x) & \mbox{if}\quad x< 2\\
 d(5-x) & \mbox{if}\quad 2\leq x < 5\\
 0 & \mbox{if}\quad 5\leq x, \end{cases}$$

$$f_{V_0, \sO_X(1)}(x)   = \begin{cases} 
d(9-2x) & \mbox{if}\quad x< 4\\
 d(5-x) & \mbox{if}\quad 4\leq x < 5\\
0 & \mbox{if}\quad 5\leq x.
\end{cases}$$

\end{rmk}

\begin{rmk}\label{re} We can give a bound on the strong HN slope 
$a_{min}(V_{t_0})$ in terms of 
the degrees of the generators $h_1, \ldots, h_{\mu}$ of $I$ as follows.

Let ${\tilde d_1} < {\tilde d_1} <\ldots <{\tilde d}_{l_1}$ be 
the  degrees of these generators (see Notations~\ref{mu}).  
If $V_0$ itself is the strong $\mu$-reduction bundle then
$a_{min}(V_{0})/d < a_{min}(M_{0})/d = 1-{\tilde d}_{l_1}$. Moreover, if 
 $V_0$ is not a strong $\mu$-reduction of itself, {\em i.e.}, if $t_0\geq 1$ 
then, by Remark~\ref{ree}, 
$$a_{min}(M_{t_0-1})/d = 1-{\tilde d}_{l_1-t_0+1}
\leq a_{min}(V_{t_0})/d  < a_{min}(M_{t_0})/d = 1-{\tilde d}_{l_1-t_0}.$$ 
\end{rmk}

\subsection{The $F$-threshold $c^I({\bf m})$  and 
 $\alpha(R, I)$ in  $\Char~p>0$} 

\vspace{5pt}
Here we prove $c^I({\bf m}) = \alpha(R, I)$.
This equality is known to hold when $R$ itself is a normal domain. Though we 
know $\alpha(R, I) = \alpha(S, IS)$, we can not deduce the equality by considering 
 the normalization of $R$ as we 
do  not know if
$c^I({\bf m}) = c^{IS}({\bf m}S)$.

Let $Y= \mbox{Proj}~R$ and let $\pi:X\longto Y$ be  the canonical map then, 
by construction, the sequence (\ref{ch0}) descends to 
the canonical  sequence 
$$0\longto W_0\longto N_0 =\oplus_{i=1}^\mu
\sO_{Y}(1-d_i)\longto \sO_{Y}(1)
\longto 0$$ of $\sO_Y$-modules.
In fact  the following lemma  implies that  the exact sequences 
(\ref{ch01})  and (\ref{chp}) also descend to similar
exact  sequences of sheaves of $\sO_Y$-modules. 

\begin{lemma}\label{cal}If for any $i <l_1$ the sequence 
\begin{equation}\label{chi}
0\longto V_i\longto M_i\longby{{\tilde g_i}} \sO_X(1) \longto 0\end{equation}
is exact then  it descends to a 
 short exact sequence
$$0\longto W_{i}\longto N_{i}\longto \sO_{Y}(1)
\longto 0$$
of $\sO_Y$-modules. In particular $V_i = \pi^*(W_i)$, where $W_i$ is a 
vector bundle on $Y$.
\end{lemma}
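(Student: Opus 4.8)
\textbf{Proof proposal for Lemma~\ref{cal}.}
The plan is to descend the sequence (\ref{chi}) along the finite map $\pi:X\to Y$ by first descending the subsheaf $M_i\subseteq M_0$. The key point is that the HN filtration $M_{l_1-1}\subset\cdots\subset M_0$ of $M_0$ is, by the explicit description in Notations~\ref{mu}, nothing but a direct-summand decomposition: $M_{l_1-i}=\sO_X(1-{\tilde d}_1)\oplus\cdots\oplus\sO_X(1-{\tilde d}_i)$, and each summand $\sO_X(1-{\tilde d}_j)$ is visibly $\pi^*$ of the corresponding summand $\sO_Y(1-{\tilde d}_j)$ of $N_0$. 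So $M_i=\pi^*(N_i)$, where $N_i\subseteq N_0$ is the partial direct sum of line bundles on $Y$, and the inclusion $M_i\hookrightarrow M_0$ is $\pi^*$ of $N_i\hookrightarrow N_0$. This handles the middle term of (\ref{chi}) without any work.

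Next I would descend the map ${\tilde g}_i:M_i\to\sO_X(1)$. Since $M_i=\pi^*N_i$ and $\sO_X(1)=\pi^*\sO_Y(1)$, the restriction of ${\tilde g}_i$ to each line-bundle summand $\sO_X(1-{\tilde d}_j)$ is multiplication by some section of $\sO_X({\tilde d}_j)=\pi^*\sO_Y({\tilde d}_j)$; but by construction these are the sections $h_k$ coming from $R$, hence already lie in $H^0(Y,\sO_Y({\tilde d}_j))=H^0(X,\sO_X({\tilde d}_j))$ — indeed the whole sequence (\ref{ch0}) was built by pulling back the sequence of $\sO_Y$-modules over $Y$. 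Therefore ${\tilde g}_i=\pi^*(g_i)$ for the corresponding map $g_i:N_i\to\sO_Y(1)$ (the restriction of $N_0\to\sO_Y(1)$ to the summand $N_i$). Define $W_i=\ker(g_i)$, a coherent $\sO_Y$-module, and let
$$0\longto W_i\longto N_i\longby{g_i}\sO_Y(1)$$
be the resulting complex on $Y$; I must show $g_i$ is surjective and $\pi^*W_i=V_i$. Surjectivity of $g_i$: the induced map $N_{l_1-1}\to\sO_Y(1)$ is nonzero (property (P1), or directly: it is nonzero after pulling back, by Lemma~\ref{lmu}~(3) applied over $X$), hence $g_i$, factoring it, is nonzero; as $\sO_Y(1)$ has rank one and is torsion-free on the integral scheme $Y$, a nonzero map into it from a sheaf generically generated by its global sections is surjective — more cleanly, apply $\pi^*$ (which is exact since $\pi$ is flat, $X,Y$ being nonsingular curves, as already used in Remark~\ref{r00}): $\pi^*\coker(g_i)=\coker({\tilde g}_i)=0$, and $\pi^*$ is faithfully flat (finite surjective), so $\coker(g_i)=0$.

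Finally, flatness of $\pi$ gives $\pi^*W_i=\pi^*\ker(g_i)=\ker(\pi^*g_i)=\ker({\tilde g}_i)=V_i$, so $W_i$ is locally free on $Y$ of the right rank (its pullback $V_i$ is locally free and $\pi$ is faithfully flat, so $W_i$ is flat, hence locally free on the smooth curve $Y$), and
$$0\longto W_i\longto N_i\longto \sO_Y(1)\longto 0$$
is the desired short exact sequence of $\sO_Y$-modules with $\pi^*(W_i)=V_i$. The only mild subtlety — and the step I expect to need the most care — is justifying that $\pi$ is flat (so that $\pi^*$ is exact and commutes with kernels) and faithfully flat (so that vanishing and local freeness descend); but this is exactly the setup already invoked in Remark~\ref{r00}, where $X$ nonsingular forces $F^s$, and likewise the finite map $\pi:X\to Y$ between the normalization and $Y$, to be flat, so no new input is required.
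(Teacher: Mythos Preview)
Your argument has a real gap: you assume $\pi:X\to Y$ is flat, even asserting at one point that $X,Y$ are both nonsingular curves. But $Y=\mbox{Proj}\,R$ need not be nonsingular here --- $R$ is only a two-dimensional domain, and the whole purpose of this lemma (it feeds into Theorem~\ref{t3}) is precisely to handle the case where $R$ is not normal, so $Y$ may well be singular. The normalization map of a singular curve is typically \emph{not} flat (over a node or cusp $\pi_*\sO_X$ fails to be locally free; compare the sequence~(\ref{calee}), where $K$ can be nonzero). Hence your appeals to $\pi^*\ker(g_i)=\ker(\pi^*g_i)$ and to faithfully flat descent of local freeness are unjustified, and the analogy with Remark~\ref{r00} does not help: there flatness of $F^s:X\to X$ is Kunz's theorem on the regular scheme $X$, which says nothing about $\pi$.

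The paper sidesteps flatness entirely. Surjectivity of $g_i$ is checked stalkwise: if $(g_i)_y$ were not onto, its image would lie in $\mathfrak m_{Y,y}\cdot(\sO_Y(1))_y$, and then for any $x\in\pi^{-1}(y)$ the image of $(\tilde g_i)_x$ would lie in $\mathfrak m_{X,x}\cdot(\sO_X(1))_x$ (the map of local rings $\sO_{Y,y}\to\sO_{X,x}$ is local), contradicting exactness of~(\ref{chi}). Once $g_i$ is surjective, the key observation is that the sequence $0\to W_i\to N_i\to \sO_Y(1)\to 0$ is \emph{locally split}, because its quotient $\sO_Y(1)$ is a line bundle. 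Therefore $W_i$ is locally a direct summand of the locally free sheaf $N_i$ (so $W_i$ is a vector bundle even though $Y$ may be singular), and $\pi^*$ applied to a locally split exact sequence is again exact with no flatness hypothesis, yielding $\pi^*W_i=V_i$. Your surjectivity step can in fact be salvaged without flatness (surjectivity of $\pi$ alone gives $\pi^*M=0\Rightarrow M=0$ for coherent $M$, via Nakayama at stalks), but the kernel identification cannot; the locally-split trick is what you are missing.
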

\begin{proof}By definition 
$M_{i} = \sum_j\sO_X(-n_j)$, where $n_j$ are nonnegative integers.
Let $N_{i} = \sum_i\sO_Y(-n_j)$. 
Then  the map ${\tilde g_i}:M_i \longto \sO_X(1)$ descends to the canonical map
 $g_{i}:N_i\longto \sO_Y(1)$.

We claim that the map $g_i$ is surjective: Otherwise there is a closed point $y\in Y$ such that 
the map $g_i:(N_i)_y \longto (\sO_Y(1))_y$ factors through the map 
${\bf m}_{Y,y}\into \sO_{Y,y} = (\sO_Y(1))_y$.
But then for any $x\in \pi^{-1}(y) \neq \phi$, the map 
${\tilde g_i}:(M_i)_x\longto (\sO_X(1))_x = \sO_{X,x}$ factors through 
${\bf m}_{X, x}\into \sO_{X,x}$, which contradicts the surjectivity of 
$M_i\longto \sO_X(1)$.

Now we have a short exact sequence 
$$0\longto W_i\longto N_i\longby{g_i} \sO_{Y}(1)
\longto 0$$
of $\sO_Y$-modules, which is locally split exact.
Hence 
$$0\longto \pi^*W_i\longto \pi^*N_i=M_i\longby{{\tilde g_i}} \pi^*\sO_{Y}(1)=\sO_X(1)
\longto 0$$
is an exact sequence of $\sO_X$-modules and therefore is the same as the 
 sequence~(\ref{chi}).\end{proof}

In the following lemma 
 $F^n_X:X\longto X$ denotes the $n^{th}$-iterated  Frobenius map on $X$ 
(ditto for $Y$). 
The sheaf $K$ is a $0$-dimensional coherent sheaf of $\sO_Y$-modules 
given by the  canonical exact sequence
\begin{equation}\label{calee}
0\longto \sO_Y\longto \pi_*\sO_X\longto K\longto 0.\end{equation}

\begin{lemma}\label{cal2}Let  $W$ be  a vector bundle on $Y$ 
and $V=\pi^*W$ then 
$$h^1(X, (F_X^{n*}V)(m)) \leq h^1(Y, (F_Y^{n*}W)(m))
\leq h^1(X, (F_X^{n*}V)(m)) + s\cdot h^0(Y,K),$$
for all $m, n\geq 0$, where $s = \rank~W$.
\end{lemma}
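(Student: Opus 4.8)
\textbf{Proof proposal for Lemma \ref{cal2}.}

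The plan is to compare the cohomology of $V = \pi^*W$ on $X$ with that of $W$ on $Y$ by pushing forward along $\pi$ and using the projection formula together with the defining sequence \eqref{calee} for $K$. First I would observe that since $F_X$ and $F_Y$ are compatible with $\pi$ (the curve $Y$ need not be smooth, but $\pi$ is finite and the absolute Frobenius commutes with any morphism of $\mathbb{F}_p$-schemes), the base-change identity $F_X^{n*}\pi^* W = \pi^* F_Y^{n*} W$ holds, and likewise $\pi^* \sO_Y(1) = \sO_X(1)$ by the very construction of $\sO_X(1)$ in Notations \ref{n4}. Hence $(F_X^{n*}V)(m) = \pi^*\big((F_Y^{n*}W)(m)\big)$, and the projection formula gives
$$\pi_*\big((F_X^{n*}V)(m)\big) = \big(F_Y^{n*}W\big)(m) \otimes_{\sO_Y} \pi_*\sO_X.$$
Since $\pi$ is finite, $\pi_*$ is exact and preserves cohomology, so $H^i(X,(F_X^{n*}V)(m)) = H^i\big(Y, (F_Y^{n*}W)(m)\otimes \pi_*\sO_X\big)$ for all $i$.

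Next I would tensor the exact sequence \eqref{calee} with the locally free sheaf $(F_Y^{n*}W)(m)$, which is exact because that sheaf is locally free, obtaining
$$0 \longto (F_Y^{n*}W)(m) \longto (F_Y^{n*}W)(m)\otimes \pi_*\sO_X \longto (F_Y^{n*}W)(m)\otimes K \longto 0.$$
Taking the long exact sequence in cohomology on $Y$ and using that $X$ is a curve (so all $H^i$ vanish for $i \geq 2$, hence also on $Y$ after the identification above, or simply because $\dim Y = 1$) and that $K$ is $0$-dimensional (so $H^1(Y,(F_Y^{n*}W)(m)\otimes K) = 0$), the relevant fragment is
$$H^1\big(Y,(F_Y^{n*}W)(m)\big) \longto H^1\big(Y, (F_Y^{n*}W)(m)\otimes \pi_*\sO_X\big) \longto 0$$
on the right, and on the left
$$H^0\big(Y, (F_Y^{n*}W)(m)\otimes K\big) \longto H^1\big(Y,(F_Y^{n*}W)(m)\big) \longto H^1\big(Y, (F_Y^{n*}W)(m)\otimes \pi_*\sO_X\big).$$
The surjectivity on the right gives $h^1(Y,(F_Y^{n*}W)(m)) \geq h^1(X,(F_X^{n*}V)(m))$ (using the identification from the projection formula), which is the first claimed inequality. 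Wait — more precisely, the map $h^1(Y, (F_Y^{n*}W)(m)) \to h^1(X,(F_X^{n*}V)(m))$ is surjective with kernel a quotient of $H^0(Y,(F_Y^{n*}W)(m)\otimes K)$, so $h^1(X,(F_X^{n*}V)(m)) \leq h^1(Y,(F_Y^{n*}W)(m)) \leq h^1(X,(F_X^{n*}V)(m)) + h^0(Y,(F_Y^{n*}W)(m)\otimes K)$.

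Finally I would bound $h^0(Y,(F_Y^{n*}W)(m)\otimes K)$ by $s\cdot h^0(Y,K)$, where $s = \rank W$: since $K$ is supported on a finite set of closed points and $(F_Y^{n*}W)(m)$ is locally free of rank $s$ (the rank is unchanged by Frobenius pullback and twisting), at each point $y$ of the support the stalk $\big((F_Y^{n*}W)(m)\otimes K\big)_y$ is a quotient of $K_y^{\oplus s}$, hence has length at most $s\cdot \mathrm{length}(K_y)$; summing over the support gives $h^0(Y,(F_Y^{n*}W)(m)\otimes K) \leq s\cdot h^0(Y,K)$, since for a $0$-dimensional sheaf $h^0$ equals the total length. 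This yields the stated chain of inequalities for all $m,n\geq 0$. The main obstacle I anticipate is making the Frobenius base-change $F_X^{n*}\pi^*W = \pi^*F_Y^{n*}W$ fully rigorous when $Y$ is singular — but this is purely formal since absolute Frobenius is a morphism over the base field and commutes with $\pi$, so $\pi\circ F_X^n = F_Y^n\circ\pi$ as morphisms $X\to Y$; everything else is standard finite-morphism and $0$-dimensional-sheaf bookkeeping.
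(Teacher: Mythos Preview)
Your proof is correct and follows essentially the same route as the paper: tensor the sequence \eqref{calee} with the locally free sheaf $(F_Y^{n*}W)(m)$, use the projection formula to identify $H^1$ of the middle term with $H^1(X,(F_X^{n*}V)(m))$, and read off the inequalities from the long exact sequence. The only cosmetic difference is that the paper asserts the equality $(F_Y^{n*}W)(m)\otimes K = K^{\oplus s}$ outright (since $K$ has zero-dimensional support and the locally free sheaf is free of rank $s$ near each point of that support), whereas you argue a length inequality stalkwise; in fact your ``quotient of $K_y^{\oplus s}$'' is an isomorphism, so you could tighten that step, but the bound you need follows either way.
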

\begin{proof}Since $W$ is a locally free sheaf of $\sO_Y$-modules 
we have the induced short exact sequence of $\sO_Y$-modules
$$0\longto (F_Y^{n*}W)(m)\longto (F_Y^{n*}W)(m)\tensor
\pi_*\sO_X\longto (F_Y^{n*}W)(m)\tensor K\longto 0.$$
But  $(F_Y^{n*}W)(m)\tensor K = K^{\oplus s}$, which gives the exact 
sequence
$$\longto H^0(Y,K^{\oplus s}) \longto H^1(Y,(F_Y^{n*}W)(m))\longto 
H^1(Y, (F_Y^{n*}W)(m)\tensor \pi_*\sO_X)\longto 0.$$
By the  projection formula
$$(F_Y^{n*}W)(m)\tensor \pi_*\sO_X = \pi_*(\pi^*((F_Y^{n*}W)(m)) = 
\pi_*[(F_X^{n*}\pi^*W)(m)] = \pi_*((F_X^{n*}V)(m))$$
which implies
 $$h^1(Y, (F_Y^{n*}W)(m)\tensor \pi_*\sO_X) = 
h^1(Y, \pi_*((F_X^{n*}V)(m))) = h^1(X, (F_X^{n*}V)(m)).$$
\end{proof}

\begin{thm}\label{t3}If $(R, I)$ is a standard graded pair, where $R$ is a  
$2$-dimensional domain then 
$$\alpha(R, I) = c^I({\bf m}).$$
In particular $c^I({\bf m}) = c^{IS}({\bf m}S)$, where $R\longto S$ 
is a finite graded degree $0$ morphism of rings.
\end{thm}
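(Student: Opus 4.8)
The plan is to establish the inequality $c^I({\bf m}) \leq \alpha(R,I)$ directly, and then obtain the reverse inequality $c^I({\bf m}) \geq \alpha(R,I)$ by a limiting estimate that uses the explicit strong HN description of $\alpha(R,I)$ from Theorem~\ref{n1}, together with the comparison between $Y = \mbox{Proj}~R$ and its normalization $X$ provided by Lemma~\ref{cal} and Lemma~\ref{cal2}. The point of working with these two lemmas is precisely to bridge the gap flagged in the remark before the theorem: although $\alpha(R,I) = \alpha(S,IS)$ is known, we do not a priori know $c^I({\bf m}) = c^{IS}({\bf m}S)$, so the argument for $c^I({\bf m})$ must be carried out downstairs on $Y$ while extracting numerical information from the geometry of $X$.

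First I would unwind the definition of $c^I({\bf m})$: since $R$ is standard graded with graded maximal ideal ${\bf m}$, the condition ${\bf m}^{r+1} \subseteq I^{[q]}$ is equivalent to $(R/I^{[q]})_{j} = 0$ for all $j \geq r+1$, so $\mbox{min}\{r : {\bf m}^{r+1}\subseteq I^{[q]}\}$ equals the top nonvanishing degree of $R/I^{[q]}$. Dividing by $q$ and letting $q = p^n \to \infty$, this says $c^I({\bf m}) = \limsup_n (1/q)\cdot(\text{top degree of } R/I^{[q]})$. On the vector-bundle side, the top degree in which $(R/I^{[q]})$ is nonzero is controlled, via the sequence (\ref{ch0}) pushed down to $Y$ (Lemma~\ref{cal}), by the largest $m$ with $h^1(Y, (F_Y^{n*}W_0)(m)) \neq 0$ — or rather the largest $m$ for which $h^0$ of the relevant twisted quotient is nonzero. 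Here I would use Lemma~\ref{cal2}: the difference between $h^1$ computed on $Y$ and on $X$ is bounded by $s\cdot h^0(Y,K)$, a constant independent of $n$ and $m$; dividing by $q$ this contributes nothing in the limit. Hence the normalized top degree of $R/I^{[q]}$ converges to the same quantity whether computed on $Y$ or on $X$, and on $X$ it is governed by $f_{V_0,\sO_X(1)} - f_{M_0,\sO_X(1)}$ through (\ref{fr}), whose maximum support is $1 - a_{min}(V_{t_0})/d = \alpha(R,I)$ by Lemma~\ref{hk2} and Theorem~\ref{n1}.

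The main obstacle, and where I would spend the most care, is converting "the HK density difference function vanishes for $x \geq \alpha(R,I)$ and is positive just below" into the \emph{integral} (cohomological dimension-count) statement needed for the $F$-threshold, with uniform control in $n$. Specifically, $\alpha(R,I) = \mbox{Sup}\{x : f_{R,I}(x) > 0\}$ records where the limiting density is supported, but $c^I({\bf m})$ wants the exact largest degree of nonvanishing of $R/I^{[q]}$ for each finite $q$, normalized; one must rule out that the top degree sits a growing distance above $q\alpha(R,I)$. The strong HN formula in Remark~\ref{*1}(1) is exactly the tool for this: it gives $h^1(X, F^{n+n_1*}V_0(m))$ \emph{exactly} (up to a bounded constant $|C_i| \leq \rank(V_0)(g-1)$) as a piecewise-linear function of $m/(qq_1)$, vanishing precisely once $m/(qq_1) \geq 1 - a_{min}(V_0)/d + (d-3)/(qq_1)$, and similarly for $M_0$ and for $V_{t_0}, M_{t_0}$. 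Feeding the cancellation from the Claim in the proof of Lemma~\ref{hk2}, one gets that $(R/I^{[q]})_j = 0$ precisely for $j$ beyond $q\alpha(R,I) + O(1)$, with the $O(1)$ uniform in $n$. Therefore $(1/q)\cdot(\text{top degree}) \to \alpha(R,I)$, giving $c^I({\bf m}) = \alpha(R,I)$.

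Finally, for the "in particular" clause: applying the equality just proved to the pair $(S, IS)$ — where $S$ is a normal (hence in particular satisfying the needed regularity-in-codimension-one) two-dimensional domain, so that also $c^{IS}({\bf m}S) = \alpha(S, IS)$ — and combining with the already-known identity $\alpha(R,I) = \alpha(S, IS)$ (from the additivity of the HK density function under the finite degree-$0$ map $R \to S$, as recalled in subsection~2.2), we conclude $c^I({\bf m}) = \alpha(R,I) = \alpha(S,IS) = c^{IS}({\bf m}S)$.
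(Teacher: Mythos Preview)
Your overall architecture is right --- cite Proposition~4.4 of [TrW] for $\alpha(R,I)\leq c^I({\bf m})$ and then prove $c^I({\bf m})\leq\alpha(R,I)$ by a cohomological vanishing bound on $Y$, using Lemma~\ref{cal2} to import numerics from $X$ --- but the central step has a genuine gap. You work throughout with $W_0$ (the descent of $V_0$) and then invoke ``the cancellation from the Claim in Lemma~\ref{hk2}'' to pull the threshold down to $\alpha(R,I)$. The problem is that the only clean upper bound you have on $Y$ is $\ell(R/I^{[q]})_m \leq h^1(Y, F^{n*}W_0(m-q))$, and this single cohomology group vanishes only for $m \gtrsim q(1-a_{min}(V_0)/d)$, which can be strictly larger than $q\,\alpha(R,I)$ --- this is precisely the phenomenon in Remark~\ref{rrr} and the whole reason the $\mu$-reduction was introduced. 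The cancellation you cite is a statement on $X$ about the \emph{difference} $h^1(V_0)-h^1(M_0)$; Lemma~\ref{cal2} transfers each term to $Y$ only up to an additive constant, so on $Y$ you get that the difference is bounded, not that $\ell(R/I^{[q]})_m$ is zero. And the ``bounded $\Rightarrow$ zero'' step (Proposition~4.6 of [TrW]) applies to $h^1$ of a single bundle, not to a signed difference.

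The paper closes exactly this gap by replacing $V_0$ with the strong $\mu$-reduction bundle $V_{t_0}$ and descending the sequence $0\to V_{t_0}\to M_{t_0}\to \sO_X(1)\to 0$ to $Y$ via Lemma~\ref{cal}, obtaining $0\to W_{t_0}\to N_{t_0}\to \sO_Y(1)\to 0$. The map $N_{t_0}\to\sO_Y(1)$ is multiplication by a \emph{subset} $h_{11},\ldots,h_{1a}$ of the generators of $I$; setting $J=(h_{11},\ldots,h_{1a})\subseteq I$ one gets $\ell(R/I^{[q]})_m\leq \ell(R/J^{[q]})_m\leq h^1(Y,F^{n*}W_{t_0}(m-q))$ --- a single $h^1$, no subtraction. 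Since $\alpha(R,I)=1-a_{min}(V_{t_0})/d$, Remark~\ref{*1}(1) gives $h^1(X,F^{(n+n_1)*}V_{t_0}(m-qq_1))=0$ for $m\geq (d-3)+x_0qq_1$; Lemma~\ref{cal2} then makes $h^1(Y,\ldots)$ bounded in that range, and Proposition~4.6 of [TrW] upgrades bounded to zero after a further constant shift. That is the missing mechanism. (A minor additional point: for the ``in particular'' clause you apply the main equality to $(S,IS)$, but $S$ need not be standard graded; the paper instead uses the chain $\alpha(S,I)\leq c^{IS}({\bf m}S)\leq c^I({\bf m})=\alpha(R,I)=\alpha(S,I)$, which avoids that hypothesis.)
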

\begin{proof} By Propostion~4.4 of [TrW], we have 
$\alpha(R, I) \leq c^I({\bf m})$.
We only need to prove that $c^I({\bf m}) \leq \alpha(R,I)$.
Let $x_0 = \alpha(R, I) = 1-a_{min}(V_{t_0})/d$.
By Lemma~\ref{cal}, the sequence~(\ref{chp}) descends to the short exact 
sequence 
$$0\longto W_{t_0} \longto N_{t_0} \longto \sO_Y(1)\longto 0$$
and $V_{t_0} = \pi^*W_{t_0}$.
If $N_{t_0}= \oplus_j\sO_Y(1-d_{1j})\longto \sO_Y(1)$ is  the multiplication 
map given by the elements 
$h_{11}, \ldots, h_{1a}\in I$ of degrees $d_{11}, \ldots, d_{1a}$, respectively,
 then for $q = p^n \gg 0$ and $m\in\N$
 we have 
$$0\longto (F^{n*}W_{t_0})(m-q) \longto \oplus_j\sO_Y(m-qd_{1j})\longto 
\sO_Y(m)\longto 0$$ 
and therefore for $J = (h_{11}, \ldots, h_{1a})$,
$$\ell(R/I^{[q]})_m\leq \ell(R/J^{[q]})_m \leq h^1(Y, F^{n*}W_{t_0}(m-q)).$$
Let $q_1 = p^{n_1}$ be such that the HN filtration of 
$F^{n_1*}V_{t_0}$ is the strong 
HN filtration.  Then, by Remark~\ref{*1}~(1),
$$h^1(X, F^{(n+n_1)*}V_{t_0}(m-qq_1)) = 0\quad\mbox{for}\quad 
m\geq (d-3)+ x_0qq_1.$$ 

By Lemma~\ref{cal2}, there is a constant  $C_0$ such that 
$h^1(Y, F^{n+n_1*}W_{t_0}(m-qq_1))\leq C_0$,  for every 
$m\geq (d-3)+ x_0qq_1$.  This implies
(see Proposition~4.6 of [TrW]) 
$$h^1(Y, F^{n+n_1*}W_{t_0}(m-qq_1)) = 0\quad\mbox{for}\quad m\geq C_0 + 
(d-3)+ x_0qq_1.$$
In particular $\ell(R/I^{[qq_1]})_m = 0$, in other words 
${\bf m}^{m} \subset I^{[qq_1]}$.
Now $$c^I({\bf m})\leq \lim_{q\to \infty}\frac{1}{qq_1}
\left[C_0 + (d-3)+ x_0qq_1\right] = x_0.$$

The second assertion follows as we have ($S$ is considered as an 
$R$-module here)
$$\alpha(S, I) \leq c^{IS}({\bf m}S) \leq c^I({\bf m}) = \alpha(R, I) = \alpha(S, I),$$
where the first inequality and the last equality follow from Proposition~4.4 of
[TrW] and Proposition~2.14 of [T2], respectively.
\end{proof}

\subsection{The $F$-threshold $c^I({\bf m})$ and $\alpha(R, I)$ in  
characteristic $0$}

\begin{notations}\label{ex5} In this section we consider the sequence
(\ref{ch0}), where $\Char~k = 0$. The bundle $V_t$ denotes
  the $\mu$-reduction bundle of $V_0$ and the filtration
\begin{equation}\label{**}\cdots \subset W_{l+1}\subset W_l \subset V_{t-1}\subset 
\cdots \subset V_0 \end{equation}
denotes the HN filtration of $V_0$.

Now, by Lemma~\ref{l01} 
\begin{enumerate}
\item $W_l = V_t$, if $\mu_{min}(V_t) > \mu_{min}(V_{t-1})$ and 
\item
$W_l\subset  V_t$, if $\mu_{min}(V_t) = \mu_{min}(V_{t-1})$.
\end{enumerate}
Moreover
\begin{enumerate}
\item ${V_0}/{V_{1}}\simeq {M_0}/{M_{1}},\ldots, 
{V_{t-1}}/{V_{t}}\simeq {M_{t-1}}/{M_{t}}$.
\end{enumerate}

For the notion of spread the reader can refer to subsection~6.3 of [TrW] 
(or [EGA]~[4] for details).
We choose a  finitely generated $\Z$-algebra $A\subset k$ such that 
 $(A, R_A, I_A, A)$, $(A, S_A, IS_A)$, $(A, X_A, \sO_{X_A}(1))$ and 
$(A, X_A, V_{0A})$ are  spreads 
for $(R, I)$, $(S,IS)$, $(X, \sO_X(1))$ and $(X, V_0)$, respectively.

Restricting to the fiber $X_s$, where $s\in \mbox{Spec}~A$ is a closed point, 
we have
  the following exact
 sequence of locally free sheaves of $\sO_{X_s}$-modules (where $X_s = 
X_A\tensor_A{\overline {k(s)}}$ and $V_0^s = V_{0A}\tensor_A{\overline {k(s)}}$). 
Let $p_s = \Char~k(s)$

\begin{equation}\label{e1}
0\longto V_0^s\longto \oplus_{i=1}^\mu\sO_{X_s}(1-d_i)\longto \sO_{X_s}(1)\longto 
0.\end{equation}
Since $V_i = \mbox{ker}(V_0\longto M_0/M_i)$, the sheaf
$V_{i_A}$ is the kernel of the map $V_{0A}\longto M_{0A}/M_{i_A}$
and hence $V_i^s := V_{iA}\tensor_A{\overline{k(s)}} = (V_0^s)_i = 
V_0^s \cap (M_i)_s$, that is 
$$ \mbox{\em the reduction mod}~~ p_s~~\mbox{of}~~V_i =   
(\mbox{\em the reduction mod}~~ p_s~~\mbox{of}~~V_0)\cap M_i.$$

As a consequence of the openness of the semistability property of sheaves ([Ma]), 
we can further  choose  $A$ such that the spread of the HN filtration of 
$V_0$ can be defined similarly.
In particular, there are spreads $(A, W_{iA})$ of $W_i$ such that 
for every $s\in \Spec~A$, the HN filtration of $V^s$ is
$$\cdots \subset W^s_{l+1}\subset W^s_l \subset V^s_{t-1}\subset 
\cdots \subset V^s_0 =V^s$$
and therefore the bundle $V_0^s$ has the $\mu$-reduction bundle $V_t^s$, 
where $t$ is indepedent of the point $s$ and  where the underlying sequence is
$$0\longto V^s_0=V^s\longto M^s_0 = M^s \longto \sO_{X_s}(1)\longto 0.$$
\end{notations}

We recall the following result (Lemma~1.8 and Lemma~1.16 from [T1]).

\begin{thm}\label{t1}If $W$ is a vector bundle on a nonsingular projective curve 
$X$ over a field of $\Char~ 0$. Then there is a spread 
$(A, X_A, W_A)$ of $(X, W)$  such that if $s$ is  a closed point in 
$\Spec~A$ and $p_s > 
4(\mbox{genus}~X)(\rank~W)^3$ then  
\begin{enumerate}\item  for every $m\geq 1$, 
 the HN filtration of  $F^{m*}(W^s)$ is a refinement of the $m^{th}$ Frobenius pull back of the 
HN filtration of $W^s$. This means,
if the HN filtration of $W^s$ is 
$0\subset E_1\subset E_2\subset \cdots \subset E_l\subset W^s$ 
then the HN filtration
of $F^{m*}(W^s)$ is of the form
$$0\subset E_{01}\subset \cdots \subset E_{0t_0} \subset F^{m*}E_1\subset
\cdots \subset F^{m*}E_i\subset E_{i1}\subset \cdots\hfill\hfill $$
$$\hfill\hfill \cdots \subset E_{it_i}\subset F^{m*}E_{i+1} 
\subset \cdots \subset F^{m*}W^s.$$
In particular, for each $i$,
the HN filtration of $F^{m*}(E_{i+1}/E_i)$ is 
$$ 0 \subset E_{i1}/F^{m*}E_i \subset \cdots \subset E_{it_i}/F^{m*}E_i 
\subset F^{m*}(E_{i+1}/E_i).$$
\item $$\lim_{p_s\to \infty} a_{min}(W^s) = \mu_{min}(W).$$
\end{enumerate}
\end{thm}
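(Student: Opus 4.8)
\textbf{Proof proposal for Theorem~\ref{t1}.}
The plan is to deduce the result directly from the cited facts of [T1], so the main task is to see that a single spread $A$ can be chosen making both conclusions hold simultaneously for all closed points $s$ with $p_s$ large. First I would recall the content of the two lemmas of [T1] being invoked: Lemma~1.8 gives, for a vector bundle $W$ over a curve in characteristic $0$ with a suitable spread, that once $p_s$ exceeds the explicit bound $4(\mathrm{genus}~X)(\mathrm{rank}~W)^3$, the strong Harder--Narasimhan filtration of each $W^s$ is obtained by successively refining the Frobenius pull-backs of the ordinary HN filtration; Lemma~1.16 gives the convergence $a_{min}(W^s)\to \mu_{min}(W)$ as $p_s\to\infty$. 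The statement to be proved is just the conjunction of these two, phrased so that the refinement assertion is spelled out at the level of the quotients $E_{i+1}/E_i$.

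The key steps, in order, are: (1) Fix a spread $(A, X_A, W_A)$ for $(X, W)$ that works for both lemmas of [T1]; since [T1] produces each spread by inverting finitely many elements of a finitely generated $\Z$-subalgebra of $k$, one may intersect (i.e.\ take the compositum inside $k$, then localize) the two spreads into a single one, shrinking $\Spec~A$ if necessary. (2) For a closed point $s$ with $p_s>4(\mathrm{genus}~X)(\mathrm{rank}~W)^3$, apply Lemma~1.8 of [T1] to conclude that the HN filtration of $F^{m*}(W^s)$ refines the $m$-th Frobenius pull-back of the HN filtration $0\subset E_1\subset\cdots\subset E_l\subset W^s$ of $W^s$; here one also uses that $\mathrm{genus}(X_s)=\mathrm{genus}(X)$ and $\mathrm{rank}(W^s)=\mathrm{rank}(W)$ for $s$ in a dense open subset of $\Spec~A$, so the numerical bound is the one stated. (3) Extract the block between $F^{m*}E_i$ and $F^{m*}E_{i+1}$ in this refined filtration; quotienting by $F^{m*}E_i$ and using flatness of the Frobenius on the nonsingular curve $X_s$ (so that $F^{m*}$ is exact and $F^{m*}E_i$ sits inside $F^{m*}E_{i+1}$ with quotient $F^{m*}(E_{i+1}/E_i)$) yields exactly the displayed HN filtration of $F^{m*}(E_{i+1}/E_i)$. (4) Finally invoke Lemma~1.16 of [T1] for assertion (2).

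I do not expect a serious obstacle here, since this is essentially a citation with a mild bookkeeping argument; the only point requiring a little care is step~(1) --- making sure one spread serves both purposes --- together with checking in step~(3) that passing to the subquotient $E_{i+1}/E_i$ genuinely converts the refinement of the pull-back filtration into the HN filtration of the pull-back of that subquotient. The latter follows because the HN filtration is characterized by having semistable quotients of strictly decreasing slopes, a property preserved by passing to a sub-interval of the filtration and then taking the associated quotient, together with the fact (Remark~\ref{r00}-type flatness of $F^m$) that $F^{m*}$ commutes with forming these subquotients. So the whole argument reduces to assembling the stated results of [T1], which is the approach I would take.
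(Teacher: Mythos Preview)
Your proposal is correct and matches the paper's approach exactly: the paper does not prove this theorem but simply cites it as Lemma~1.8 and Lemma~1.16 of [T1], so your plan to assemble those two results (with the routine observations about a common spread and about passing to subquotients) is precisely what is intended.
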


Now we proceed to give a well defined notion of 
 $\alpha(R, I)$ in characteristic~$0$.
\begin{lemma}\label{l*} 
We have a spread $A$ such that
\begin{enumerate}
\item $ \mu_{min}(V_{t-1}) < \mu_{min}(V_t)
  \implies  \alpha(R_s, I_s) = 1-a_{min}(V_{t}^s)/d,~~\forall~~s\in \mbox{maxSpec}(A)$, 
\item $\mu_{min}(V_{t-1}) = \mu_{min}(V_t)
 \implies \alpha(R_s, I_s) = 1-a_{min}(V_{t-1}^s)/d,~~\forall~~s\in 
\mbox{maxSpec}(A)$,
 \end{enumerate}
where $V_t^s$ is a {\em reduction mod} $p_s$ of $V_t$.
\end{lemma}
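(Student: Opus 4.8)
**Proof strategy for Lemma~\ref{l*}.** The plan is to combine Theorem~\ref{n1}(2), which computes $\alpha(R_s,I_s) = 1 - a_{min}(V_{t_0}^s)/d$ in terms of the \emph{strong} $\mu$-reduction bundle $V_{t_0}^s$ of $V_0^s$, with Theorem~\ref{t1}(1), which controls how the HN filtration behaves under Frobenius pullback once $p_s$ is large. The key point to establish is: for $p_s \gg 0$, the strong $\mu$-reduction bundle $V_{t_0}^s$ of $V_0^s$ coincides with $V_t^s$ in case (1) and with $V_{t-1}^s$ in case (2) (in the degenerate case $t=0$ one has $V_{t_0}^s = V_0^s$ and there is nothing to prove, so assume $t \geq 1$). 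Once this identification is made, both assertions are immediate from Theorem~\ref{n1}(2).

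\textbf{Choice of spread and the main estimate.} First I would enlarge the spread $A$ chosen in Notations~\ref{ex5} so that, in addition, Theorem~\ref{t1}(1) applies to $W = V_0$ (and to each $M_i$, $V_i$, $W_l$), i.e.\ so that for all closed points $s$ with $p_s > 4(\mathrm{genus}~X)(\rank~V_0)^3$ the HN filtration of $F^{m*}(V_0^s)$ refines the Frobenius pullback of the HN filtration of $V_0^s$, for every $m \geq 1$. By Notations~\ref{ex5} the HN filtration of $V_0^s$ is $\cdots \subset W_l^s \subset V_{t-1}^s \subset \cdots \subset V_0^s$, with $V_i^s/V_{i+1}^s \simeq M_i^s/M_{i+1}^s$ for $0 \le i < t$, and each $M_i^s/M_{i+1}^s$ is a direct sum of line bundles of the same degree, hence strongly semistable. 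Therefore the Frobenius pullback $F^{m*}(V_i^s/V_{i+1}^s) \simeq F^{m*}(M_i^s/M_{i+1}^s)$ is already semistable for $0 \le i < t$; by Theorem~\ref{t1}(1) no new subquotients are introduced at those steps, so for $m$ large enough that $F^{m*}V_0^s$ has its strong HN filtration, that filtration is $\cdots \subset F^{m*}W_l^s \subset F^{m*}V_{t-1}^s \subset \cdots \subset F^{m*}V_0^s$ possibly refined \emph{below} $F^{m*}W_l^s$. In particular $\mu_{min}(F^{m*}V_i^s) = p^m\mu_{min}(V_i^s)$ and $\mu_{min}(F^{m*}M_i^s) = p^m\mu_{min}(M_i^s)$ for $0 \le i < t$, and since $\mu_{min}(V_i^s) = \mu_{min}(M_i^s)$ there while $\mu_{min}(V_t^s) < \mu_{min}(M_t^s)$ for $p_s \gg 0$ (this strict inequality lifts from characteristic $0$ by constructibility, after possibly shrinking $A$), the least index at which the strong slopes drop is exactly $t_0 = t$ if $F^{m*}W_l^s = F^{m*}V_t^s$ and $t_0 = t-1$ if $F^{m*}W_l^s \subsetneq F^{m*}V_t^s$. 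The first alternative is precisely $\mu_{min}(V_{t-1}^s) < \mu_{min}(V_t^s)$ and the second is $\mu_{min}(V_{t-1}^s) = \mu_{min}(V_t^s)$, by Lemma~\ref{l01}; and by constructibility of the HN slopes in the spread, these conditions on $V^s$ are equivalent to the corresponding conditions on $V$ itself for $p_s \gg 0$.

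\textbf{Conclusion.} In case (1), $V_{t_0}^s = V_t^s$ and Theorem~\ref{n1}(2) gives $\alpha(R_s, I_s) = 1 - a_{min}(V_t^s)/d$; in case (2), $V_{t_0}^s = V_{t-1}^s$ and likewise $\alpha(R_s, I_s) = 1 - a_{min}(V_{t-1}^s)/d$. Shrinking $A$ once more so that every closed point $s \in \mathrm{maxSpec}(A)$ has $p_s$ large enough for all of the above finitely many constructibility and size conditions to hold gives the statement for all $s \in \mathrm{maxSpec}(A)$.

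\textbf{Main obstacle.} The delicate step is controlling the behavior of the strong $\mu$-reduction bundle under reduction mod $p$: a priori the strong HN filtration of $F^{m*}V_0^s$ could refine the Frobenius pullback of the HN filtration of $V_0^s$ at \emph{any} step, which would spoil the identification $t_0 \in \{t-1, t\}$. What saves us is that the first $t$ graded pieces of the HN filtration of $V_0^s$ are pulled back from $M_0^s$ and are therefore strongly semistable, so refinement can only occur from $W_l^s$ onward; this is exactly the content used in Theorem~\ref{t1}(1) together with the strong semistability of $\oplus \sO_{X_s}(\cdot)$. Making the threshold on $p_s$ uniform over the finitely many bundles involved, and checking that the strict/equality dichotomy $\mu_{min}(V_{t-1}) \lessgtr \mu_{min}(V_t)$ is preserved by reduction mod $p$ for $p_s \gg 0$, are then routine applications of constructibility in the spread.
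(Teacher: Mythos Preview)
Your overall strategy matches the paper's: identify the strong $\mu$-reduction index $t_0$ for $V_0^s$ using Theorem~\ref{t1}(1) and then invoke Theorem~\ref{n1}(2). Case~(1) is handled correctly and agrees with the paper.

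The gap is in case~(2). You assert that the strong HN filtration of $F^{m*}V_0^s$ refines the pullback of the HN filtration of $V_0^s$ \emph{only below} $F^{m*}W_l^s$, on the grounds that the quotients $V_i^s/V_{i+1}^s\simeq M_i^s/M_{i+1}^s$ are strongly semistable for $0\le i<t$. But in case~(2) one has $W_l\subsetneq V_t$, so the HN filtration of $V_0^s$ passes through $W_l^s$ and \emph{not} through $V_t^s$; the relevant HN quotient just above $W_l^s$ is $V_{t-1}^s/W_l^s$, not $V_{t-1}^s/V_t^s$. This quotient is semistable but is not a direct sum of line bundles of equal degree, and there is no reason for it to be strongly semistable. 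Hence Theorem~\ref{t1}(1) permits refinement between $F^{m*}W_l^s$ and $F^{m*}V_{t-1}^s$, and your claim that $t_0=t-1$ whenever $W_l^s\subsetneq V_t^s$ is unjustified. (In fact your own premise $\mu_{min}(F^{m*}V_i^s)=p^m\mu_{min}(V_i^s)=\mu_{min}(F^{m*}M_i^s)$ for all $i<t$ would force $t_0=t$, contradicting your conclusion.)

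The paper's fix is to accept that in case~(2) one only gets $t_0\in\{t-1,t\}$, and then to check that if $t_0=t$ the two answers coincide: from $t_0=t$ one has $\mu_{min}(F^{m_1*}V_{t-1}^s)=\mu_{min}(F^{m_1*}M_{t-1}^s)=p^{m_1}\mu_{min}(V_{t-1}^s)=p^{m_1}\mu_{min}(V_t^s)\ge \mu_{min}(F^{m_1*}V_t^s)$, while Lemma~\ref{l01}(2) applied to $F^{m_1*}V_0^s$ gives the reverse inequality $\mu_{min}(F^{m_1*}V_t^s)\ge \mu_{min}(F^{m_1*}V_{t-1}^s)$; hence $a_{min}(V_t^s)=a_{min}(V_{t-1}^s)$ and either choice yields $\alpha(R_s,I_s)=1-a_{min}(V_{t-1}^s)/d$.
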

\begin{proof}We  choose a spread $A$  as in 
Notations~\ref{ex5} such that $p_s > 4(\mbox{genus}~X)(\rank~V_0)^3$, 
for every $s\in \mbox{maxSpec}~A$.

Recall that $W_l \subseteq V_t \subset V_{t-1}$.

 We fix a closed point  $s\in \Spec~A$ and let $F:X_s\longto X_s$ denote the Frobenius map.  
Let $m_1$ ($m_1$ may depend on $s$) be an integer 
such that both $F^{m_1*}(V_t^s)$
and $F^{m_1*}(V_{t-1}^s)$ 
have strong HN filtration. Let $V^s_{t_0}$  be 
 the strong $\mu$-reduction bundle of $V_0^s$. This means  
 $F^{m_1*}(V^s_{t_0})$  is the $\mu$-reduction bundle of 
$F^{m_1*}(V^s_0)$ and 
$t_0\leq t$.

\vspace{5pt}

\noindent{Case~(1)}\quad Let $\mu_{min}(V_{t-1}) < \mu_{min}(V_t)$.
 
Then $V_t = W_l$ and the HN filtration of $V_0$ is 
$\cdots \subset W_{l+1}\subset V_t\subset V_{t-1} \subset \cdots V_0$.

Now by Theorem~\ref{t1}~(1),
the HN filtration for $F^{m_1*}(V_0^s)$ is 
$$\cdots \subset F^{m_1*}(W_{l+1}^s) \subset \cdots \subset F^{m_1*}(V_t^s)
\subset F^{m_1*}(V_{t-1}^s)
\subset \cdots \subset F^{m_1*}(V_0^s),$$
  as  $V_i^s/V_{i+1}^s\simeq 
M_i^s/M_{i+1}^s$ is 
strongly semistable on $X_s$, for $i<t$.
Hence 
$$\mu_{min}(F^{m_1*}(V_{i}^s)) =  
\mu_{min}(F^{m_1*}(M_i^s)), ~~\mbox{for all}~~i<t.$$
In particular $t_0 \geq  t$ and therefore 
 $a_{min}(V_{t_0}^s) = a_{min}(V_t^s)$ which implies $\alpha(R, I) 
= 1-a_{min}(V_t^s)/t$.

\vspace{5pt}

\noindent{Case~(2)}\quad Let $\mu_{min}(V_{t-1}) = \mu_{min}(V_t)$. 

Then 
the HN filtration for $F^{m_1*}(V_0^s)$ is 
$$F^{m_1*}(W_l^s)\subset \cdots  \subset F^{m_1*}(V_{t-1}^s)
\subset F^{m_1*}(V_{t-2}^s)
\subset\cdots \subset F^{m_1*}(V_0^s).$$
 
Therefore, we have 
\begin{equation}\label{***}\mu_{min}(F^{m_1*}(V_i^s)) = 
\mu_{min}(F^{m_1*}(M_i^s)), ~~\mbox{for all}~~i<t-1.\end{equation}
Hence, $t_0 = t-1$ or $t_0 = t$.
 
If $t_0 = t-1$ then $\alpha(R_s, I_s) = 1-a_{min}(V_{t-1}^s)/d$.

If $t_0 = t$ then 
$ \mu_{min}(F^{m_1*}(V_t^s))\geq  \mu_{min}(F^{m_1*}(V_{t-1}^s))$ and 
$$\mu_{min}(F^{m_1*}(V_{t-1}^s))=\mu_{min}(F^{m_1*}(M_{t-1}^s)) = 
p^m\mu_{min}(M_{t-1}) = p^{m_1}\mu_{min}(V_{t-1}^s).$$
On the other hand $p^{m_1}\mu_{min}(V_{t-1}^s) = p^{m_1}\mu_{min}(V_t^s) 
\geq \mu_{min}(F^{m_1*}(V_t^s))$.          
Hence $$a_{min}(V_t^s) = a_{min}(V_{t-1}^s) \implies 
\alpha(R_s, I_s) =  
1-a_{min}(V_t^s)/d = 1-a_{min}(V_{t-1}^s)/d.$$
 \end{proof}

\begin{thm}\label{vb1}Let $(R, I)$ be a standard graded pair 
defined over a field of characteristic~$0$ with a spread 
$(A, R_A, I_A)$ as in Notations~\ref{ex5}. 
Let $V_t$ be the $\mu$-reduction bundle of $V_0$.
Let $s \in \mbox{Spec}(A)$ denote a closed point and $p_s = \Char~R_s$. Then
\begin{enumerate}
\item  for every $x\geq 0$, 
$f^{\infty}_{R, I}(x) := \lim_{p_s\to \infty}f_{R_s, I_s}(x)$ exists and
the function\linebreak $f^{\infty}_{R, I}:[0, \infty)\longto [0, 
\infty)$ is a continuous compactly supported  function 
such that 

 $$\alpha^{\infty}(R,I) := \mbox{Sup}~\{x\mid f^\infty_{R, I}(x) \neq 0\} =
1- \frac{\mu_{min}(V_t)}{d}.$$
\item $\lim_{p_s\to \infty}\alpha(R_s, I_s) = \alpha^{\infty}(R,I)$. 
\end{enumerate}
\end{thm}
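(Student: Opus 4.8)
The plan is to reduce everything to the HK density function formula for vector bundles on curves (Remark~\ref{*1}), applied to the two bundles $V_t$ and $M_t$, and then pass to the limit $p_s\to\infty$ using Theorem~\ref{t1}(2). First I would recall from (\ref{fr}) that $f_{R_s,I_s}(x)=f_{V_0^s,\sO_{X_s}(1)}(x)-f_{M_0^s,\sO_{X_s}(1)}(x)$ for $x\ge 1$, and from Theorem~\ref{n1}(1) (applicable since $R_s$ has characteristic $p_s>0$, $V_{t_0}^s$ being the strong $\mu$-reduction bundle of $V_0^s$) that this equals $f_{V_{t_0}^s,\sO_{X_s}(1)}-f_{M_{t_0}^s,\sO_{X_s}(1)}$. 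The key structural input, already assembled in Notations~\ref{ex5}, is that $t$ (the index of the $\mu$-reduction bundle) is independent of $s$, that $V_i^s/V_{i+1}^s\simeq M_i^s/M_{i+1}^s$ for $i<t$, and that these quotients are \emph{strongly} semistable on $X_s$ (being direct sums of line bundles, hence so is $M_t^s$). From Lemma~\ref{l*} we know that the strong $\mu$-reduction index $t_0$ equals $t$ in Case~(1) and is $t$ or $t-1$ in Case~(2); this is what the statement "$V_t^s$ or $V_{t-1}^s$" records.

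Next I would write down, using Remark~\ref{*1}(2), the explicit piecewise-linear formula for $f_{V_t^s,\sO_{X_s}(1)}-f_{M_t^s,\sO_{X_s}(1)}$ in terms of the strong HN data of $V_t^s$ and of $M_t^s$. Since $M_t^s$ is strongly semistable with slope $\mu(M_t^s)=\mu(M_t)=(1-\tilde d_{l_1-t})d$ independent of $s$, and since, by Theorem~\ref{t1}(2) applied to the bundle $V_t$ (for which we have a spread over $A$ as in Notations~\ref{ex5}), each strong HN slope $a_j(V_t^s)$ of $V_t^s$ converges as $p_s\to\infty$ to the corresponding HN slope $\mu_j(V_t)$ of $V_t$ — in particular $a_{min}(V_t^s)\to\mu_{min}(V_t)$ — every breakpoint and every slope in the piecewise-linear description converges. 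The difference function $f_{V_t^s,\sO_{X_s}(1)}-f_{M_t^s,\sO_{X_s}(1)}$ is supported (for $x\ge 1$) in the interval $[\,1-\tfrac{\mu(M_t^s)}{d}\wedge\cdots,\ 1-\tfrac{a_{min}(V_t^s)}{d}\,]$, exactly as in the proof of Lemma~\ref{hk2}; in the limit this converges uniformly to a continuous compactly supported function on $[0,\infty)$, which we call $f^\infty_{R,I}$, and its maximal support point is $1-\mu_{min}(V_t)/d$. This proves part~(1). One subtlety to handle carefully: in Case~(2) with $t_0=t-1$ for some $s$, one should check that $f_{V_{t_0}^s}-f_{M_{t_0}^s}=f_{V_t^s}-f_{M_t^s}$ as well — but this follows from Lemma~\ref{hk2}(1) applied to the two nested sequences, or directly from the isomorphism $V_{t-1}^s/V_t^s\simeq M_{t-1}^s/M_t^s$, so the limiting function is the same regardless.

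For part~(2) I would combine Lemma~\ref{l*} with Theorem~\ref{t1}(2). By Lemma~\ref{l*}, $\alpha(R_s,I_s)=1-a_{min}(V_t^s)/d$ when $\mu_{min}(V_{t-1})<\mu_{min}(V_t)$, and $\alpha(R_s,I_s)=1-a_{min}(V_{t-1}^s)/d$ when $\mu_{min}(V_{t-1})=\mu_{min}(V_t)$. In the first case, Theorem~\ref{t1}(2) applied to $V_t$ gives $a_{min}(V_t^s)\to\mu_{min}(V_t)$, so $\alpha(R_s,I_s)\to 1-\mu_{min}(V_t)/d=\alpha^\infty(R,I)$. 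In the second case, Theorem~\ref{t1}(2) applied to $V_{t-1}$ gives $a_{min}(V_{t-1}^s)\to\mu_{min}(V_{t-1})=\mu_{min}(V_t)$, and again $\alpha(R_s,I_s)\to 1-\mu_{min}(V_t)/d$. So in both cases part~(2) holds.

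The main obstacle I anticipate is the bookkeeping in part~(1): verifying that the strong HN data of $V_t^s$ (resp.\ $V_{t-1}^s$) really does control the difference function near its maximal support, and that the finitely many breakpoints do not collide in a way that destroys uniform convergence of the piecewise-linear functions. This is handled by the same mechanism as in Lemma~\ref{hk2} — the quotients $V_i^s/V_{i+1}^s$ for $i<t$ (resp.\ $i<t-1$) are strongly semistable and their contributions to $f_{V_t^s}$ and $f_{M_t^s}$ cancel exactly — but one must be careful that the cancellation is uniform in $s$, which it is because the relevant ranks and slopes of those quotients are independent of $s$ (they equal those of $M_i/M_{i+1}$), and only the slopes coming from the "genuinely $p$-dependent" part $F^{m_1*}V_t^s$ move. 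Once that is pinned down, uniform convergence on $[0,\infty)$ and hence continuity and compact support of $f^\infty_{R,I}$, together with convergence of the maximal support point, follow directly.
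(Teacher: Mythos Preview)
Your argument for part~(2) is essentially identical to the paper's: split into the two cases of Lemma~\ref{l*} and apply Theorem~\ref{t1}(2) to $V_t$ or $V_{t-1}$.

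For part~(1) you take a slightly different route. The paper does \emph{not} pass to the strong $\mu$-reduction bundle; it stays with $V_0$ and $M_0$, invokes an external result (Remark~6.6 of [TrW]) to the effect that $f^\infty_{E,\sO_X(1)}:=\lim_{p_s\to\infty}f_{E^s,\sO_{X_s}(1)}$ exists and is given by the HN data of $E$, and then reads off the maximal support of $f^\infty_{V_0}-f^\infty_{M_0}$ directly from the HN data of $V_t$ via Lemma~\ref{l01}. Your approach --- going through $V_{t_0}^s$ and $M_{t_0}^s$ and tracking the convergence of strong HN data --- is more hands-on and also works, but it forces you to handle the $t_0\in\{t,t-1\}$ dichotomy already here, and to justify that \emph{all} strong HN slopes of $V_t^s$ converge to the HN slopes of $V_t$ (Theorem~\ref{t1}(2) as stated only gives this for $a_{min}$; the full statement follows by applying it to each HN subquotient using Theorem~\ref{t1}(1), which is what the cited Remark~6.6 of [TrW] packages). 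The paper's route avoids both of these bookkeeping issues.

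One genuine slip: you write ``$M_t^s$ is strongly semistable with slope $\mu(M_t^s)=(1-\tilde d_{l_1-t})d$''. This is false in general --- $M_t$ is a direct sum of line bundles of \emph{different} degrees $1-\tilde d_1,\ldots,1-\tilde d_{l_1-t}$ and has HN filtration $M_{l_1-1}\subset\cdots\subset M_t$. What you need (and what is true) is that the HN filtration of $M_t^s$ is already the strong HN filtration, so its strong HN data equals its HN data and is independent of $s$. With that correction your argument goes through.
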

\begin{proof}\noindent{(1)}\quad By (\ref{fr}), we have
  $f_{R_s, I_s}(x) = f_{{V^s}, \sO_{X_s}(1)}(x)
-f_{{M^s}, \sO_{X_s}(1)}(x)$. 
On the other hand, for a vector bundle $E$ on $X$ there is a  spread $(A, E_A)$ 
such that 
$$f^\infty_{E,\sO_{X}(1)}(x):=  
\lim_{p_s\to \infty}f_{E^s, \sO_{X_s}(1)}(x)$$ exists,
where the function $f^\infty_{E,\sO_{X}(1)}$ can be written  
 in terms of HN data of $E$ (see Remark~6.6 of [TrW]).
In particular we have a well defined function 
$$f^\infty(R,I)(x) := \lim_{p_s\to \infty}f_{R_s, I_s}(x)
= f^\infty_{V_0,\sO_{X}(1)}(x) - f^\infty_{M_0,\sO_{X}(1)}(x),$$
 where the functions $f^\infty_{V_0,\sO_{X}(1)}$ and 
$f^\infty_{M_0,\sO_{X}(1)}$ can be written in terms of their respective  
HN data.
Moreover, if $(\{\mu_1, \ldots, \mu_{k+1}\}, \{r_1, \ldots. r_{k+1}\})$ is the HN data for $V_t$ then 
$$f^\infty(R,I)(x) = -r_{k+1}(\mu_{k+1}+xd)\quad\mbox{for}\quad 
x\in  
\left(-{\mbox{min}(\mu_k, \mu_{min}(M_t))}/{d}, -{\mu_{min}(V_t)}/{d}\right)$$
and 
$f^\infty(R,I)(x) = 0$, for 
$x\in [-{\mu_{min}(V_t)}/{d},~~ \infty)$.

Hence $f^{\infty}_{R,I}:[0, \infty)\longto [0,\infty)$ is a compactly supported continuous function and
$\alpha^\infty(R,I) = 1- {\mu_{min}(V_t)}/{d}$.

\vspace{5pt}
\noindent{(2)}\quad 
By Theorem~\ref{t1}~(2)
$\lim_{p_s\to \infty}a_{min}(V_t^s) = \mu_{min}(V_t)$.
 If $\mu_{min}(V_{t-1}) <\mu_{min}(V_t)$ then, by Lemma~\ref{l*}, 
$$\lim_{p_s\to \infty}\alpha(R_s,I_s) = 
\lim_{p_s\to \infty}1-{a_{min}(V_t^s)}/{d} = 1-{\mu_{min}(V_t)}/{d}.$$
If 
$\mu_{min}(V_{t-1}) = \mu_{min}(V_t)$ then
$$\lim_{p_s\to \infty}\alpha(R_s,I_s)  = 
\lim_{p_s\to \infty}1-{a_{min}(V_{t-1}^s)}/{d} = 1-{\mu_{min}(V_{t-1})}/{d} = 
1-{\mu_{min}(V_t)}/{d}.$$
\end{proof}

\vspace{5pt}

\noindent{\underline {Proof of Theorem}~D}~:\quad
By Theorem~\ref{t3}, for $p_s>0$
 we have 
$\alpha(R_s,I_s) = c^{I_s}({\bf m}_s)$, hence assertion~(1) follows from Theorem~\ref{vb1}. 

(2) If $V_t$ is the $\mu$-reduction bundle of $V_0$ then $\mu_{min}(V_t) 
\geq \mu_{min}(V_{t-1})$.

If
$\mu_{min}(V_t) > \mu_{min}(V_{t-1})$ then
$$c^I_{\infty}({\bf m}) = 1-{\mu_{min}(V_t)}/{d} = 1-{\mu_{min}(V^s_t)}/{d} 
\leq 1-{a_{min}(V_t^s)}/{d} = c^{I_s}({\bf m}_s).$$
If $\mu_{min}(V_t) = \mu_{min}(V_{t-1})$ then
$$c^{I}_{\infty}({\bf m}) = 1-{\mu_{min}(V_{t-1})}/{d} 
= 1-{\mu_{min}(V^s_{t-1})}/{d} \leq 
 1-{a_{min}(V_{t-1}^s)}/{d} = c^{I_s}({\bf m}_s).$$

\vspace{5pt}

\noindent{(3)}\quad
Suppose $V_0$ is semistable. 
Let $V_t$ be the $\mu$-reduction bundle of $V_0$.

 \vspace{5pt}

\noindent{Case~(1)}\quad If $t=0$. Then 
$t_0 =0$. Hence $c^{I}_{\infty}({\bf m})= 1- \mu(V_0)/d$ and 
$c^{I_s}({\bf m}_s)  = 1-{a_{min}(V_0^s)}/{d}$.

 \vspace{5pt}

\noindent{Case~(2)}\quad If $t\geq 1$. Then $0 \neq  V_t \subset V_{t-1}$ 
and, by Lemma~\ref{l01}, the HN filtration of $V_0$ is 
$0\subseteq W_l \subset V_{t-1} \subseteq V_0$.
Hence $V_{t-1} = V_0$ and $W_l =0$. So $V_1$ is the $\mu$-reduction bundle 
of $V_0$
such that  $\mu_{min}(V_1) = \mu_{min}(V_0) = \mu(V_0)$.
Hence again  $c^{I}_{\infty}({\bf m}) = 1- \mu(V_0)/d$ and 
$c^{I_s}({\bf m}_s) = 1-{a_{min}(V_0^s)}/{d}$.
Therefore
$c^{I}_{\infty}({\bf m}) = c^{I_s}({\bf m}_s) \iff
\mu(V_0) = a_{min}(V_0^s) $ $\iff 
V^s_0\quad\mbox{is strongly semistable}.$.\hspace{50pt} $\Box$

\section{$F$-thresholds and reduction {\em mod} $p$}

\begin{lemma}\label{5.1}Let $V$ be a vector bundle of rank $r$ on
a nonsingular projective curve $X$ of genus $g$ over a  field of $\Char~p>0$. If
$p > \mbox{max}\{4(g-1)r^3, r!\}$  then
$$a_{min}(V) < \mu_{min}(V) \implies
\mu_{min}(V) = a_{min}(V) + {a}/{pb},$$ where $a$, $b$ are positive
integers such that $\mbox{g.c.d.}(a, p) = 1$ and $0< a/b \leq 4(g-1)(r-1)$.
\end{lemma}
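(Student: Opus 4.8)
The plan is to combine the structure theorem for strong HN filtrations over a field of small characteristic (Theorem~\ref{t1}, i.e. Langer's result) with two elementary numerical observations. First I would invoke Theorem~\ref{t1}~(1): choosing $m\geq 1$ such that $F^{m*}V$ carries its strong HN filtration, the hypothesis $p>4(g-1)r^3$ forces this filtration to be a refinement of the $m$-th Frobenius pullback of the HN filtration of $V$. Consequently, if $0 = F_0\subset F_1\subset\cdots\subset F_k\subset F_{k+1}=V$ is the HN filtration of $V$ with minimal HN quotient $Q = V/F_k$ of rank $r'\leq r-1$ (the case $Q=V$, i.e. $V$ semistable, would give $a_{min}(V)=\mu_{min}(V)$ contrary to hypothesis, so $r'\leq r-1$), then $a_{min}(V) = (1/p^m)\mu_{min}(F^{m*}Q)$, and the minimal strong HN slope of $V$ is computed entirely inside the semistable bundle $Q$. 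So it suffices to prove: for a semistable bundle $Q$ of rank $r'$ on $X$, $a_{min}(Q) = \mu(Q) - a/(pb)$ with $\gcd(a,p)=1$, $0<a/b\leq 4(g-1)r'$, whenever $a_{min}(Q)<\mu(Q)$.

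Next I would analyze $Q$ itself. Pick the smallest $m$ with $F^{m*}Q$ strongly semistable (equivalently $m$ minimal so that $F^{m*}Q$ has its strong HN filtration trivial); since $a_{min}(Q)<\mu(Q)$ we have $m\geq 1$. Then $F^{(m-1)*}Q$ is \emph{not} strongly semistable but $F^{(m-1)*}$ of its HN pieces... — more cleanly: let $m$ be minimal with $F^{m*}Q$ strongly semistable; then $a_{min}(Q) = (1/p^m)\mu_{min}(F^{m*}Q)$ and the first Frobenius pullback where semistability breaks is $F^{1*}Q$ applied $(m-1)$ times... The key point I want is that $p^{m-1}\bigl(\mu(Q)-a_{min}(Q)\bigr)$ equals $\mu(E)-\mu_{min}(F^{1*}E)$ for the appropriate destabilizing-at-level-$1$ semistable sub/quotient $E$, and that $\mu(W)-\mu_{min}(F^{1*}W)$ for a semistable bundle $W$ of rank $\leq r'$ that becomes unstable after one Frobenius pullback is a rational number of the form (integer)$/\bigl(p\cdot\rank W\bigr)$ — because $F^{1*}W$ has integer degree $= p\deg W$ and its destabilizing subbundle has rank $<\rank W\leq r'$. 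This is where the bound $p>r!$ enters: I would use it, together with the standard fact that the denominators of slopes of subbundles of $F^{1*}W$ divide $\rank(F^{1*}W)!\mid r'!$, to guarantee that after clearing the factor $p$ from $\deg F^{1*}W = p\deg W$, the residual denominator $b_1$ divides $r'!$ and is coprime to $p$, so $\gcd(a,p)=1$ after writing in lowest terms. Tracking back through the $p^{m-1}$, the denominator of $\mu_{min}(V)-a_{min}(V)$ is exactly $p$ times a factor coprime to $p$, giving the claimed form $a/(pb)$ with $\gcd(a,p)=1$.

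For the size bound $0<a/b\leq 4(g-1)(r-1)$, I would argue that $\mu(Q)-a_{min}(Q) = \mu_{max}(F^{m*}Q)/p^m - \mu_{min}(F^{m*}Q)/p^m$ is bounded by the total "spread" of strong HN slopes of $Q$, and use the bound on instability coming from Shepherd-Barron / Sun-type estimates (already implicit in the regime $p>4(g-1)r^3$): the difference $\mu_{max}(F^{m*}Q)-\mu_{min}(F^{m*}Q)$ over $p^m$ is at most $2(g-1)(r'-1)$ for the quotient, and combining the sub and quotient contributions across the single HN step of $V$ gives the global bound $4(g-1)(r-1)$. (Alternatively, and more in the spirit of this paper, one deduces it directly from Remark~\ref{*1}: the support of $f_{V,\sO_X(1)}$ differing from the semistable model by at most a term controlled by $(g-1)\rank$.) The positivity $a/b>0$ is just the hypothesis $a_{min}(V)<\mu_{min}(V)$.

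The main obstacle I anticipate is the bookkeeping in the second paragraph: precisely relating $\mu_{min}(V)-a_{min}(V)$ to a \emph{single} Frobenius descent step so that the prime $p$ factors out exactly once (not $p^m$), and simultaneously controlling the coprime-to-$p$ part of the denominator via $p>r!$. Getting $\gcd(a,p)=1$ — i.e. ruling out that $p$ also divides the "numerator after one factor of $p$ is removed" — requires care: one must show the level-$1$ instability defect $\mu(W)-\mu_{min}(F^{1*}W)$ has $p$-adic valuation exactly $-1$, which follows because $F^{1*}W$'s destabilizing subsheaf, being a subsheaf of a Frobenius pullback, cannot have degree divisible by $p$ in a way that cancels (this uses that $F^{1*}$ of a line bundle summand contributes $p\cdot(\text{deg})$ but the Harder–Narasimhan sub of $F^{1*}W$ is genuinely new, not pulled back, precisely because $W$ was semistable). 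I would isolate this as a short sublemma before assembling the final bound.
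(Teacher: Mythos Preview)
Your plan has a genuine error and is far more complicated than the paper's argument. The paper's proof is essentially two lines: it cites Lemma~1.14 of [T1], which directly gives
\[
\mu_{min}(V) - a_{min}(V) = C/p,\qquad 0 < C \leq 4(g-1)(r-1),
\]
for the whole bundle $V$ (no reduction to a quotient needed); then, since both $\mu_{min}(V)$ and $\mu_{min}(F^{m*}V)$ lie in $\Z[1/r!]$ and $p>r!$, one clears denominators to write $C/p = a/(pb)$ with $\gcd(a,p)=1$ and $a/b = C$. That is the entire argument.

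Your attempt to rebuild this from scratch breaks down in two places. First, the parenthetical ``the case $Q=V$, i.e.\ $V$ semistable, would give $a_{min}(V)=\mu_{min}(V)$ contrary to hypothesis, so $r'\leq r-1$'' is false: a semistable bundle that is not \emph{strongly} semistable has $\mu_{min}(V)=\mu(V)$ but $a_{min}(V)<\mu(V)$, so the hypothesis does \emph{not} exclude $Q=V$, and your bound $4(g-1)r'$ does not recover $4(g-1)(r-1)$ in that case. Second, the phrase ``smallest $m$ with $F^{m*}Q$ strongly semistable'' is incoherent: if $Q$ is semistable but not strongly semistable, then $F^{m*}Q$ is \emph{never} semistable for $m\geq 1$; what stabilizes is the HN filtration of $F^{m*}Q$, not semistability itself. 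Consequently the ``single Frobenius descent step'' bookkeeping you propose, and the sublemma asserting that the destabilizing subsheaf of $F^{1*}W$ has degree with $p$-adic valuation forcing $v_p(C/p)=-1$, are neither correctly formulated nor needed. The coprimality $\gcd(a,p)=1$ comes for free from $p>r!$ and the fact that all slopes involved have denominators dividing $r!$; there is no delicate $p$-adic analysis to do.
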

\begin{proof}Let $m$ be an integer such
that $F^{m*}(V)$ achieves
the  strong HN filtration. Since $V$ is not 
strongly semistable, the integer  $m\geq 1$. 
By definition $a_{min}(V) =
\mu_{min}(F^{m*}V)/p^m$.
By Lemma~1.14 of [T1],
$$\mu_{min}(F^{m*}V)/p^m + C/p = \mu_{min}(V), ~~\mbox{where}~~
0 < C \leq 4(g-1)(r-1).$$
Note that  $\mu_{min}(F^{m*}V)$ and $\mu_{min}(V) \in \Z[1/r!]$. This implies
$C p^{m-1}(r!) \in \N$ and we can write
$$\mu_{min}(V) = a_{min}(V) + \frac{C p^{m-1} (r!)}{p^m (r!)} =
a_{min}(V) + \frac{a}{p b},$$
where $a$ $b$ are positive integers such that $\mbox{g.c.d.}(a, p) = 1$.
This proves the lemma.\end{proof}

\vspace{5pt}
\noindent{\underline {Proof of Theorem}~E~:\quad
By Theorem~D,  if
$c^{I_p}({\bf m}_p) \neq c^I_{\infty}({\bf m})$ then $c^{I_p}({\bf m}_p)
> c^I_{\infty}({\bf m})$. Let $X = \mbox{Proj}~S$ where 
$R\longto S$ is the normalization of $R$. By Lemma~\ref{l*}, there is a 
vector bundle $W$ ($W = V_t$ or $V_{t-1}$, where $V_t$, $V_{t-1}$ 
 are the bundles
 given  as in Notation~\ref{ex5}) on $X$
 such that, for $p_s\gg 0$,
$$c^I_{\infty}({\bf m}) = 1-\mu_{min}(W)/d\quad \mbox{and}\quad
c^{I_s}({\bf m}_s) = 1-a_{min}(W^s)/d)\quad \mbox{and}\quad 
\mu_{min}(W^s) = \mu_{min}(W),$$
where  $W_s$  denotes  
the  {\em reduction
mod} $p_s$ bundle of $W$ on $X_s$ and $d= \deg~X$. 

Therefore, by Lemma~\ref{5.1}, if $g$ denotes the genus of $X$ and $r+1$ is the
 number of minimal generator of $I$, then  we can write
$$c^{I_s}({\bf m}_s) = 1-\frac{\mu_{min}(W^s)}{d} + \frac{a}{p_sb},$$
where $a, b \in \Z_+$ and $0< a/b \leq 4(g-1)(r-1)$.

Since $\mu_{min}(W^s) = d_1/r_1$, where $d_1, r_1\in \Z_{+}$ such that
$r_1 \leq r$, the theorem follows for $p_s\gg 0$.
\hfill\hfill\hspace{50pt} $\Box$

\begin{rmk}\label{r3}  By the 
above theorem,  
if  $c^{{\bf m}_p}({\bf m}_p)\neq c^{\bf m}_{\infty}({\bf m})$ then
$p$ divides the denominator of
$c^{{\bf m}_p}({\bf m}_p)$.  However, the following example from [TrW] 
(Example~6.9) shows that  
  the denominator need not always be a power of $p$.

Let $R_p= k[x,y,z]/(h)$ be the Klein curve of degree $d\geq 17$ over a field of 
characteristic $p\geq d^2$. In other words 
  $h= x^{d-1}y+y^{d-1}z+z^{d-1}x$. If, in addition, 
$d$ is odd integer and $p\equiv\pm 2\pmod{(d^2-3d+3)}$ then
we know ({\em loc.cit.})
$$c^{{\bf m}_p}({\bf m}_p) = (3pd+d^2-9d+15)/2pd
\quad\mbox{and}\quad c^{\bf m}_{\infty}({\bf m}) = 3/2.$$

\end{rmk}

\end{document}